\theoremstyle{plain}
\newtheorem{theorem}{Theorem}[section]
\newtheorem{lemma}      [theorem]{Lemma}
\renewcommand{\t} {^{\top}}
\newcommand{\norm} [2][]{\left\|#2\right\|_{#1}}
\newcommand{\true} {{\rm true}}
\newcommand{\bfepsilon}{{\boldsymbol{\epsilon}}}
\newcommand{\bfvarepsilon}{{\boldsymbol{\varepsilon}}}
\newcommand{\bfA}{{\bf A}}
\newcommand{\bfB}{{\bf B}}
\newcommand{\bfC}{{\bf C}}
\newcommand{\bfD}{{\bf D}}
\newcommand{\bfI}{{\bf I}}
\newcommand{\bfL}{{\bf L}}
\newcommand{\bfM}{{\bf M}}
\newcommand{\bfP}{{\bf P}}
\newcommand{\bfQ}{{\bf Q}}
\newcommand{\bfW}{{\bf W}}
\newcommand{\bfa}{{\bf a}}
\newcommand{\bfb}{{\bf b}}
\newcommand{\bfs}{{\bf s}}
\newcommand{\bfv}{{\bf v}}
\newcommand{\bfx}{{\bf x}}
\newcommand{\bfzero}{{\bf0}}
\newcommand{\calR}{\mathcal{R}}
\newcommand{\bbE}{\mathbb{E}}
\newcommand{\bbN}{\mathbb{N}}
\newcommand{\bbR}{\mathbb{R}}
\newcommand{\cref}[1]{\ref{#1}}
\let\csname equation*\endcsname\relax
\let\csname endequation*\endcsname\relax
\DeclareMathOperator*{\argmin}{arg\,min}
\begin{document}

\title[Sampled Limited Memory Methods]{Sampled Limited Memory Methods for Massive Linear Inverse Problems}

\author{Julianne Chung}
\address{Department of Mathematics, Computational Modeling and Data Analytics Division, Academy of Integrated Science, Virginia Tech, Blacksburg, VA, USA}
\ead{jmchung@vt.edu}

\author{Matthias Chung}
\address{Department of Mathematics, Computational Modeling and Data Analytics Division, Academy of Integrated Science, Virginia Tech, Blacksburg, VA, USA}
\ead{mcchung@vt.edu}

\author{J. Tanner Slagel}
\address{Department of Mathematics, Virginia Tech, Blacksburg, VA, USA}
\ead{slagelj@vt.edu}

\author{Luis Tenorio}
\address{Department of Applied Mathematics and Statistics, Colorado School of Mines, Golden, CO, USA}
\ead{ltenorio@mines.edu}

\begin{abstract}
In many modern imaging applications the desire to reconstruct high resolution images, coupled with the abundance of data from acquisition using ultra-fast detectors, have led to new challenges in image reconstruction.  A main challenge is that the resulting linear inverse problems are massive. The size of the forward model matrix exceeds the storage capabilities of computer memory, or the observational dataset is enormous and not available all at once.  Row-action methods that iterate over samples of rows can be used to approximate the solution while avoiding memory and data availability constraints.  However, their overall convergence can be slow.
In this paper, we introduce a sampled \emph{limited memory} row-action method for linear least squares problems, where an approximation of the global curvature of the underlying least squares problem is used to speed up the initial convergence and to improve the accuracy of iterates. We show that this limited memory method is a generalization of the damped block Kaczmarz method, and we prove linear convergence of the expectation of the iterates and of the error norm up to a convergence horizon.
Numerical experiments demonstrate the benefits of these sampled limited memory row-action methods for massive 2D and 3D inverse problems in tomography applications.
\end{abstract}

\noindent{\it Keywords}: least squares problems, row-action methods, Kaczmarz methods, randomized methods, tomography, streaming data

\section{Introduction}\label{sec:intro}
Recent advancements in imaging technology have led to many new challenging mathematical problems for image reconstruction. One problem that has gained significant interest, especially in the age of big data, is that of image reconstruction when the number of unknown parameters is huge (i.e., images with very high spatial resolution) and the size of the observation dataset is massive and possibly growing \cite{parkinson2017machine,parkinson2018achieving}. In streaming data problems, not only is it undesirable to wait until all data have been collected to obtain a reconstruction, but also partial reconstructions may be needed to inform the data acquisition process (e.g., for optimal experimental design or model calibration).

We consider linear inverse problems of the form
\begin{equation}\label{eq:inverseproblem}
  \bfb = \bfA\bfx_{\rm true} +\bfvarepsilon,
\end{equation}
where $\bfx_{\rm true}\in \bbR^{n}$ is the desired solution, $\bfA \in \bbR^{m \times n}$ models the forward process, $\bfb \in \bbR^{m}$ contains observed measurements, and $\bfvarepsilon \in \bbR^m$ represents additive noise. We investigate sampled limited memory row-action methods to approximate a solution to the corresponding massive least squares (LS) problem,
$\min_{\bfx} \norm[2]{\bfA\bfx-\bfb}^{2}.$
For $\bfA$ with full column rank, the goal is to approximate the unique solution,
\begin{equation}
  \label{eq:LS}\bfx_{\text{LS}} = \argmin_{\bfx} \norm[2]{\bfA\bfx-\bfb}^{2} = (\bfA\t \bfA)^{-1} \bfA\t \bfb.
\end{equation}
The LS problem is ubiquitous and core to computational mathematics and statistics.  However, for massive problems where both the number of unknown parameters $n$ and the size of the observational dataset $m$ are massive or dynamically growing, standard techniques based on matrix factorization or iterative methods based on \emph{full} matrix-vector multiplications (e.g., Krylov subspace methods \cite{paige1982lsqr} or randomized methods \cite{avron2010blendenpik, halko2011finding}) are not feasible. Problems of such nature appear more frequently and are plentiful, for instance in classification problems \cite{bottou2004large}, data mining \cite{hand2000data,Rajaraman:2011:MMD:2124405,zeng2014incremental}, 3-D molecular structure determination \cite{boumal2018heterogeneous,levin20183d}, and super-resolution imaging \cite{chung2006numerical,slagel2019sampled}.

\emph{Row-action methods} such as the Kaczmarz method have emerged as an attractive alternative due to their scalability, simplicity, and quick initial convergence \cite{censor2009note,eldar2011acceleration,Gower2015,needell2010randomized,needell2016stochastic,Needell2014,strohmer2009comments,strohmer2009randomized}.
Furthermore, for ill-posed inverse problems such as in tomography reconstruction, row-action methods are widely used and exhibit regularizing properties \cite{andersen2014generalized,elfving2014semi}.
Basically, row-action methods are iterative methods where each iteration only requires a sample of rows of $\bfA$ and $\bfb$, thus circumventing issues with memory or data access.
The most widely-known row-action methods are the Kaczmarz and block Kaczmarz methods \cite{gordon1970algebraic}, where only one row or one block of rows of $\bfA$ and $\bfb$ are required at each iteration.  Various extensions have been proposed to include random sampling and damping parameters (e.g., \cite{elfving1980block,andersen2014generalized}), and many authors have studied the convergence properties of these methods
\cite{strohmer2009randomized,strohmer2009comments,Needell2014,needell2016stochastic}. In Section \ref{sec:ram} we provide a detailed review of previous works in this area, but first we present the problem setup.

To mathematically describe the sampling process, let $\{\bfW_k\}$ be an independent and identically distributed (i.i.d.) sequence of $m\times \ell$ random matrices uniformly distributed on the set $\left\{\bfW^{(1)},\ldots, \bfW^{(M)}\right\}$, where $\bfW^{(i)}$ are such that $\bbE \bfW_k\bfW_k\t = \beta \bfI$ for some $\beta>0$.
Then, at the $k$-th iteration we denote
$\bfA_{k}=\bfW\t_{k}\bfA$ and $\bfb_{k}= \bfW\t_{k}\bfb$.
In this paper, we focus on a row-action method called \emph{sampled limited memory for LS} (\texttt{slimLS}), where given an arbitrary initial guess $\bfx_{0} \in \mathbb{R}^{n}$,
 the $k$-th \texttt{slimLS} iterate is defined as
 \begin{align} \label{eq:ram} \bfx_{k} = \bfx_{k-1} - \bfB_{k}\bfA\t_{k}\left(\bfA_{k}\bfx_{k-1}-\bfb_{k}\right), \end{align}
  with
  \begin{equation}
    \label{eq:B_k}
  \bfB_{k}  = \left(\alpha^{-1}_{k}\bfC_{k} + \bfM\t_{k}\bfM_{k}\right)^{-1} \quad\mbox{and}\quad
    \bfM_{k} = [\bfA_{k-r}\t, \ldots,  \bfA_{k}\t ]\t.  \end{equation}
  Here $\{\bfC_{k}\}$ is a sequence of positive definite matrices, $\{\alpha_k\}$
 is sequence of damping parameters,
and the parameter $r\in\bbN_0$ is a ``memory parameter'' where we define $\bfA_{k-r}$ with negative index as an empty matrix. Hence, $\bfM_k$ increases in size within the first $r$ iterations, and the global curvature of the problem is approximated using previous samples of the data.

Various choices for $\bfW$ can be used, e.g., see \cite{chung2017stochastic}.  Notice that when the realizations of $\bfW$ are sparse matrices, only information from a few rows of $\bfA$ is extracted at each iteration, and $\bfA_k$ contains partial information of $\bfA$.
A straightforward choice of $\bfW$, which we consider here, is where $\bfW_k$ is chosen such that $\bfA_k$ contains $\ell$ selected rows of $\bfA$ where $\beta = 1/M$.
With this sampling scheme, the \texttt{slimLS} method is a generalization of the damped block Kaczmarz method, which is obtained when $\bfC_{k} = \bfI$ and $r=0$.
The \texttt{slimLS} method can also be interpreted as a stochastic approximation method \cite{shapiro2009lectures}. Using the properties of $\bfW$ described above, one can show that the LS problem in \eqref{eq:LS} is equivalent to the following stochastic optimization problem,
 \begin{equation}
 \label{eq:so}
  \argmin_\bfx \ \mathbb{E} \norm[2]{\bfW\t\left(\bfA\bfx-\bfb\right)}^{2}.
 \end{equation}
 Stochastic approximation methods for \eqref{eq:so} may have the form
   $\bfx_{k} = \bfx_{k-1} - \bfB_{k} \nabla f_{\bfW_{k}}\left(\bfx_{k-1}\right),$
 where $f_{\bfW_{k}}\left(\bfx\right) = \norm[2]{\bfW_{k}\t\left(\bfA\bfx-\bfb\right)}^{2}$ and $\{\bfB_{k} \}$ is some sequence of positive semi-definite matrices.
 For the particular choice of $\bfB_k$ defined in \eqref{eq:B_k}, we see that the \texttt{slimLS} method is a stochastic approximation method.
 Furthermore, since samples of rows of $\bfA$ are used at each iteration, randomized row-action methods can be characterized as stochastic approximation methods applied to \eqref{eq:so}.

Although our proposed \texttt{slimLS} method can be interpreted as both a row-action and a stochastic approximation method, the main distinction of \texttt{slimLS} compared to existing methods to approximate~\eqref{eq:LS} is that \texttt{slimLS} exhibits favorable initial \emph{and} asymptotic convergence properties for constant and vanishing step sizes, respectively.  Kaczmarz methods have fast initial convergence, but for inconsistent systems iterates converge asymptotically to a weighted LS solution rather than the desired LS solution \cite{needell2010randomized}. On the other hand, stochastic gradient methods (where $\bfB_k = \alpha_k\bfI$) are guaranteed to converge asymptotically to the LS solution but can have erratic initial convergence.
We show linear convergence of the expectation of \texttt{slimLS} iterates, and we prove linear convergence of the expected squared error norm up to a ``convergence horizon" for constant damping parameter. Furthermore, it can be shown that \texttt{slimLS} iterates with decaying damping parameter converge asymptotically to the LS solution \cite{chung2017stochastic,Bottou1998}.
The power of the \texttt{slimLS} method is revealed in our numerical examples, where we demonstrate the performance of the \texttt{slimLS} method for massive and streaming tomographic reconstruction problems.

 An outline of the paper is as follows. In Section \ref{sec:ram}, we give an overview of previous work on row-action methods and make connections to and distinctions from existing methods. In Section \ref{sec:conv}, we provide convergence results for \texttt{slimLS} iterates.
 Numerical results are presented in Section \ref{sec:num}, where we compare the performance of \texttt{slimLS} to existing methods, and conclusions are provided in Section \ref{sec:con}.

 \section{Previous works on row-action methods} \label{sec:ram}
 Different choices of $\bfB_{k}$ in \eqref{eq:ram} yield different well-known row-action methods.
 The most computationally simple choice is $\bfB_{k} = \alpha_{k}\bfI$, which gives the standard stochastic gradient method,
 \begin{align*}
 \bfx_{k} = \bfx_{k-1} - \alpha_{k} \bfA\t_{k}\left(\bfA_{k}\bfx_{k-1}-\bfb_{k}\right).
 \end{align*}
 Under mild conditions, the stochastic gradient method converges to the LS solution \cite{Bottou1998,chung2017stochastic}, but convergence can be very slow and depends heavily on the choice of the step size $\alpha_k$ \cite{schaul2013no,zeiler2012adadelta}.
 We remark that $\alpha_k$ has different interpretations depending on the scientific community.  It is often referred to as the learning rate in machine learning, the step size in classical optimization, and the relaxation parameter in algebraic reconstruction techniques for tomography. Notice that for \texttt{slimLS} iterates, the damping parameter plays the role of the step size.

 Stochastic Newton or stochastic quasi-Newton methods have also been proposed to accelerate convergence \cite{chung2017stochastic,bottou2004large,gower2017randomized,Byrd2016}. For the stochastic Newton method, we can let $\bfB_{k} = \alpha_k\left(\bfA\t_{k}\bfA_{k}\right)^{\dagger}$ in \eqref{eq:ram}, and we get the block Kaczmarz method,
 \begin{align} \label{eq:stochnewt1}
   \begin{split} \bfx_{k}  & = \bfx_{k-1} - \alpha_{k}\left(\bfA\t_{k}\bfA_{k}\right)^{\dagger}\bfA_{k}\t\left(\bfA_{k}\bfx_{k-1}-\bfb_{k}\right)\\
    & = \bfx_{k-1} - \alpha_{k}\bfA_{k}^{\dagger}\left(\bfA_{k}\bfx_{k-1}-\bfb_{k}\right),
  \end{split}
 \end{align}
 where we have used the property that $\left(\bfA\t_{k}\bfA_{k}\right)^{\dagger}\bfA_{k}\t = \bfA_{k}^{\dagger}$.
 Thus, we see that the block Kaczmarz method is nothing more than a stochastic Newton method. Note that for $\alpha_k=1$, we get the standard block Kaczmarz method, and linear convergence to within a convergence horizon has been shown in \cite{Needell2014,needell2015randomized}.
 For a decaying $\alpha_k$, the block Kaczmarz method converges to the solution of a weighted LS problem, rather than to the desired LS solution \cite{censor1983strong}.

 For the special case where $\bfW$ is a uniform random vector on the columns of the identity matrix, each iteration only requires one row of $\bfA$.  More precisely, let $\bfa_{i} \in \mathbb{R}^{1 \times n}$ denote the $i$-th row of $\bfA$ and let $\tau$ be a random variable with uniform distribution on the set $\{1, \dots, m\}$, then $\bfW_k\t\bfA = \bfa_{\tau(k)}$.
 In this case, stochastic Newton iterates in \eqref{eq:stochnewt1} are identical to the randomized Kaczmarz method,
 \begin{align}  \label{eq:Kacz} \bfx_{k} = \bfx_{k-1} -  \alpha_{k} \frac{\bfa_{\tau(k)}\bfx_{k-1} - b_{\tau(k)}}{\|\bfa_{\tau(k)}\t\|_2^{2}}\bfa\t_{\tau(k)}, \end{align}
 which has been studied extensively, see e.g., \cite{strohmer2009randomized,feichtinger1992new,herman1993algebraic,natterer2001mathematics,whitney1967two,censor1983strong,tanabe1971projection,hanke1990acceleration,zouzias2013randomized,needell2010randomized}.
 For consistent systems where $\bfA$ has full column rank, it was first shown in $1937$ that the Kaczmarz method with cyclic control (i.e., $\tau(k) = ((k-1) \,\,\text{mod}\,\, m)+1$) converges to the LS solution \cite{kaczmarz1937angeniherte}.  The method has gained widespread popularity in tomography applications, where it is commonly known as the Algebraic Reconstruction Technique \cite{natterer2001mathematics, censor1983strong, tanabe1971projection,hanke1990acceleration,gordon1970algebraic,herman2009fundamentals}. The randomized Kaczmarz method was shown to have an expected linear convergence rate that depends on the condition number of $\bfA$ \cite{strohmer2009randomized,strohmer2009comments}. For inconsistent systems the Kacmarz method does not converge to the LS solution. For a decaying step size $\alpha_k$, iterates will converge to a weighted LS solution $\tilde{\bfx} = \argmin_\bfx \norm[2]{\bfD^{-1}\left(\bfA\bfx-\bfb\right)}^{2}$  where $\bfD \in \mathbb{R}^{m \times m}$ is a diagonal matrix with diagonal elements $d_{i} = \norm[2]{\bfa_{i}}^{2}$, see \cite{censor1983strong,chung2017stochastic}. For a constant step size, these iterates will converge linearly to the weighted LS solution to within what is known as a convergence horizon, which accounts for the variance in the iterates \cite{needell2010randomized,needell2016stochastic}.

 To address problems that arise when some rows have small norm (i.e., a small denominator in~\eqref{eq:Kacz}), Andersen and Hansen \cite{andersen2014generalized} in 2014 considered a variant of the Kaczmarz method to include a \emph{damping} term. They showed a connection to proximal gradient methods and provided convergence properties under cyclic control.
  When the blocks $\bfA_{k}$ are ill-conditioned, computing the search direction in \eqref{eq:stochnewt1} can become numerically unstable and a similar idea can be used. A damping term can be introduced in the sample Hessian, which leads to the damped block Kaczmarz method,
  \begin{equation}
    \label{eq:dbK}
     \bfx_{k} =  \bfx_{k-1} - \left(\alpha^{-1} _{k}\bfI + \bfA\t_{k}\bfA_{k}\right)^{-1}\bfA\t_{k}\left(\bfA_{k}\bfx_{k}-\bfb_{k}\right).
  \end{equation}
 Notice that including the damping parameter eliminates the need for a step size parameter, although one could still be included.

 To speed up convergence, stochastic quasi-Newton methods use the current and any previous samples of $\bfA$ to produce a matrix $\bfB_{k}$ that approximates the global curvature $\left(\bfA\t\bfA\right)^{-1}$. For general convex optimization, stochastic quasi-Newton methods that use an LBFGS type update have been introduced and analyzed \cite{Byrd2016,Gower2016s,mokhtari2015global}. These methods have been investigated for nonlinear problems; however, for linear problems better approximations can be obtained by exploiting the fact that the Hessian is constant.
 One row-action method for linear problems is the randomized recursive LS method where the $k$-th iterate, which is given by
 \begin{equation}
   \label{eq:rrls}
    \bfx_{k} = \bfx_{k-1} - \left(\sum_{i=1}^{k}\bfA\t_{i}\bfA_{i}\right)^{\dagger}\bfA\t_{k}\left(\bfA_{k}\bfx_{k}-\bfb_{k}\right),
 \end{equation}
 is the minimum norm solution of
 \begin{equation}
   \label{eq:rrls2}
    \min_{\bfx} \norm[2]{\begin{bmatrix} \bfA_{1} \\ \vdots \\ \bfA_{k} \end{bmatrix}\bfx - \begin{bmatrix} \bfb_{1} \\ \vdots \\ \bfb_{k} \end{bmatrix} }^{2}.
 \end{equation}
 This equivalency is shown in \ref{sec:proofs2} and implies that after sampling all $M$ blocks exactly once, we get $\bfx_{M} = \bfx_{\text{LS}}$. The disadvantage is that
 the randomized recursive LS algorithm is not computationally feasible for very large problems because of the large linear solve required at each iteration and the cost to store $\sum_{i=1}^{k}\bfA\t_{i}\bfA_{i}$, see \cite{chen2006regression,kushner2003stochastic,slagel2019sampled}.
 Notice that if $\bfC_{k} = \sum_{i=1}^{k-r-1}\bfA\t_{i}\bfA_{i}$ and $\alpha_{k}=1$ in \eqref{eq:ram}, we recover the randomized recursive LS algorithm. Thus, the \texttt{slimLS} iterates \eqref{eq:ram} can be interpreted as a limited memory variant of the recursive LS algorithm.  On the other hand, if $\bfC_{k}=\alpha^{-1}_{k}\bfI_{n}$ the \texttt{slimLS} method can be interpreted as a generalization of the damped block Kaczmarz method.

 It should be noted that other methods exist for solving very large LS problems, but many have limitations that prohibit their use for massive or streaming data problems. For example, for problems where $m$ is small enough to allow storage of an $m \times m$  matrix, the Sherman-Woodbury identity can be used to get the exact solution \cite{egidi2006sherman}.  In our problems, $m$ and $n$ are on the order of hundreds of millions.  Randomized methods such as Blendenpik \cite{avron2010blendenpik} and LSRN \cite{meng2014lsrn} are effective for cases where $m \gg n $ or $n \gg m$, (assuming matrix $\bfA$ has a large gap in the singular values). Nevertheless, these methods require full matrix-vector-multiplications and do not work for streaming problems.

 Next, for a specific choice of $\bfW_k$, we make a connection to the sampled limited memory Tikhonov (\texttt{slimTik}) method described in \cite{slagel2019sampled} to approximate a Tikhonov regularized solution,
 \begin{equation}\label{eq:TLS1}
    \bfx_{\text{tik}} = \argmin_{\bfx} \ \ \norm[2]{\bfA\bfx-\bfb}^{2} + \lambda^{2} \norm[2]{\bfL\bfx}^{2}\\
    = \norm[2]{ \left[\begin{array}{c} \bfA \\ \lambda \bfL \end{array}\right] \bfx - \left[\begin{array}{c} \bfb \\ \bf0 \end{array}\right]}^{2},
 \end{equation}
 where $\lambda\in\bbR^+$ is the regularization parameter and the regularization matrix $\bfL\in \bbR^{n\times n}$ is invertible\footnote{This assumption is not required \cite{hansen2010discrete} but is used here for notational simplicity.}.
 In particular, let $\bfW_k$ be defined as in Section \ref{sec:intro} and define random variable
 \begin{equation*}\widetilde{\bfW}_k = \left[\begin{array}{cc} {\bfW_k} & {\bf0}_{m \times n} \\ {\bf0}_{n \times \ell} & \frac{1}{\sqrt{M}}\bfI \end{array}\right], \end{equation*}
 where $\widetilde{\bfW}_k$ has the property that $\mathbb{E}\widetilde{\bfW}_k {\widetilde{\bfW}_k}\t = \tfrac{1}{M}\bfI$.
 Then, \texttt{slimLS} applied to \eqref{eq:TLS1} with $\bfC_{k} = \bfL\t\bfL$
 gives iterates,
 \begin{align}
   \label{eq:slimTik}
 \bfx_{k} & = \bfx_{k-1} - \left(\left(\alpha^{-1}_{k} +\frac{r \lambda^2}{M}\right)\bfL\t\bfL  + \bfM\t_{k}\bfM_{k}\right)^{-1}\left(\bfA\t_{k}\left(\bfA_{k}\bfx_{k-1}-\bfb_{k}\right) + \frac{\lambda^2}{M}\bfL\t\bfL\bfx_{k-1}\right),
 \end{align}
 which are equivalent to \texttt{slimTik} iterates with a fixed regularization parameter.
 The significance of this equivalence is that the analysis and results that we present in the next section can be extended to the Tikhonov problem.
 It should be noted that a good choice of $\lambda$ may not be known in advance and must be estimated.  Methods to update the Tikhonov regularization parameter within the \texttt{slimTik} method have been studied in \cite{slagel2019sampled}, but a theoretical analysis for such cases remains a topic of ongoing research.

 \section{Convergence properties of \texttt{slimLS}}  \label{sec:conv}
 In this section we analyze the convergence properties of the \texttt{slimLS} method.  In particular, we will show that it exhibits favorable initial convergence properties without the memory burden of having to save all previous samples (e.g., as in randomized recursive LS \cite{bjorck1996numerical,slagel2019sampled}).  This is possible because \texttt{slimLS} iterates can utilize previous samples to better approximate the Hessian $\bfA\t\bfA$.
 We show that for a fixed damping parameter $\alpha >0$, memory level $r=0$, and $\bfC_{k} = \bfI$,
 \texttt{slimLS} iterates exhibit linear convergence of the expectation of the iterates and linear convergence of the $L^2$-error up to a convergence horizon.
 This type of analysis is essential for understanding stochastic approximation methods \cite{kushner2003stochastic,Needell2014,needell2016stochastic,strohmer2009randomized,Gower2015}, and it may reveal potential trade-offs between solution accuracy and speed of convergence based on the damping parameter.
 For example,
 such analyses have been proved for the Kaczmarz method (for vanishing step size) and for the block Kaczmarz method (for step size one) \cite{Gower2015,needell2016stochastic,Needell2014,strohmer2009randomized}, but to the best of our knowledge results have not been proved for the damped block Kaczmarz method.
 For clarity of presentation, all proofs have been relegated to \ref{sec:proofs}.

 \medskip
 The following definitions will be used throughout the paper.
 We will use the functions $\lambda_{\min}(\cdot)$ and $\lambda_{\max}(\cdot)$ that provide the smallest and largest
 eigenvalues of a matrix, and write
 \[
 A_{\min} = \min_k \left\{\lambda_{\min}(\bfA\t\bfW^{(i)}\left(\bfW^{(i)}\right)\t\bfA) >0 \right\}  \mbox{ and } A_{\max} = \max_k\lambda_{\max}(\bfA\t\bfW^{(i)}\left(\bfW^{(i)}\right)\t\bfA),
 \]
 where the minimum is across all of the $M$ different realizations of $\bfW_k$ that lead to a positive minimum eigenvalue, while the maximum is across all of the $M$ realizations.
 For a fixed $\alpha>0$ we define the matrices
 \[
 \bfB_k(\alpha)=\alpha \left(\bfI+\alpha \,\bfA\t_{k}\bfA_{k}\right)^{-1}\quad \mbox{and} \quad \bfB = \mathbb{E}\,\bfB_k(\alpha)\bfA\t_{k}\bfA_{k}
 = \bfI -\mathbb{E}\,\bfB_k(\alpha)/\alpha.
 \]
 For simplicity we will often write $\bfB_k$ instead of $\bfB_k(\alpha)$.
 It is clear that $\bfB$ is symmetric positive semi-definite. In fact, it is positive definite with $\norm[2]{\bfB} < 1$ when $\bfA$ has full column-rank (see \ref{lem:B}), in which case we define
 \begin{equation}
   \label{eq:xhat}
 \widehat{\bfx} = \argmin_{\bfx} \norm[2]{\bfB\bfx-\mathbb{E}\,\bfB_{k}\bfA\t_{k}\bfb_{k}}^{2} \\
  = \bfB^{-1}\mathbb{E}\,\bfB_{k}\bfA\t_{k}\bfb_{k}.
 \end{equation}
 Note that all of the expectations in this paper are understood to be with respect to the joint distribution of the sequence $\{\bfW_k\}$ conditional on the noise.

 \begin{theorem} \label{thm:firstmoment}
 Let $\bfA \in \mathbb{R}^{m \times n}$ have full column-rank.
 For arbitrary initial vector $\bfx_0 \in \bbR^n$ and damping parameter $\alpha >0$, define
 \begin{equation*}
 \bfx_{k}  = \bfx_{k-1} - \bfB_{k}(\alpha)\,\bfA\t_{k}\left(\bfA_{k}\bfx_{k-1}-\bfb_{k}\right)
 \end{equation*}
 Then:
   \begin{enumerate}
 \vspace{-.2cm}
     \item $\mathbb{E}\,\bfx_{k} \rightarrow \widehat{\bfx}$, or more precisely,
 \begin{equation*} \norm[2]{\mathbb{E}\, \bfx_{k} - \widehat{\bfx}}  \leq  \rho^{k}\norm[2]{\bfx_{0}-\widehat{\bfx}},\end{equation*}
 where $\rho = \norm[2]{\mathbb{E}\,\bfB_k(\alpha)/\alpha} < 1$.
 \item The $L^2$-error around $\widehat{\bfx}$ can be bounded by
 \begin{equation}
   \label{eq:horizon}
 \mathbb{E}\norm[2]{\bfx_{k}-\widehat{\bfx}}^{2}  \leq  \left(1 - 2c\right)^{k}\norm[2]{\bfx_{0}-\widehat{\bfx}}^{2} + \alpha^{2}c^{-1}\sigma^{2},
 \end{equation}
  where
  $0 < 1-2c < 1$, with $c = \lambda_{\text{\emph{min}}}\left(\bfB\right)/(1+\alpha A_{\text{\emph{max}}})$
 and $\sigma = \mathbb{E}\norm[2]{\bfA\t_{k}(\bfA_{k}\widehat{\bfx}-\bfb_{k})}$.
 \end{enumerate}
 \end{theorem}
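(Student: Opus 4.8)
The plan is to track the error $\bfe_k = \bfx_k - \widehat{\bfx}$ and to exploit that each $\bfW_k$ is independent of the history $\bfW_1,\dots,\bfW_{k-1}$, so that $\bfB_k$ and $\bfA_k$ are independent of $\bfe_{k-1}$. Writing $\bfA_k\bfx_{k-1}-\bfb_k = \bfA_k\bfe_{k-1} + (\bfA_k\widehat{\bfx}-\bfb_k)$ and setting $\bfP_k = \bfB_k\bfA\t_k\bfA_k$ and $\bfg_k = \bfB_k\bfA\t_k(\bfA_k\widehat{\bfx}-\bfb_k)$, the iteration collapses to the affine recursion
\begin{equation*}
\bfe_k = (\bfI - \bfP_k)\,\bfe_{k-1} - \bfg_k .
\end{equation*}
Two elementary facts are used throughout. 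First, $\bfI-\bfP_k = (\bfI+\alpha\bfA\t_k\bfA_k)^{-1} = \alpha^{-1}\bfB_k$ is symmetric with spectrum in $(0,1]$, while $\bfP_k$ is symmetric positive semi-definite with eigenvalues $\tfrac{\alpha\mu}{1+\alpha\mu}\in[0,1)$, where $\mu$ ranges over the eigenvalues of $\bfA\t_k\bfA_k$; in particular $\norm[2]{\bfB_k}\le\alpha$ and $\norm[2]{\bfP_k}<1$. Second, $\mathbb{E}\,\bfP_k = \bfB$ by definition, and $\mathbb{E}\,\bfg_k = \bfB\widehat{\bfx} - \mathbb{E}\,\bfB_k\bfA\t_k\bfb_k = \bfzero$ by the definition of $\widehat{\bfx}$ in \eqref{eq:xhat}.

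For part (i), I would take the expectation of the recursion. Conditioning on the history and using independence together with $\mathbb{E}\,\bfg_k=\bfzero$ gives $\mathbb{E}\,\bfe_k = (\bfI-\mathbb{E}\,\bfP_k)\,\mathbb{E}\,\bfe_{k-1} = (\bfI-\bfB)\,\mathbb{E}\,\bfe_{k-1}$. Unrolling yields $\mathbb{E}\,\bfe_k = (\bfI-\bfB)^k\bfe_0$, and since $\bfI-\bfB = \alpha^{-1}\mathbb{E}\,\bfB_k$ is symmetric, $\norm[2]{\mathbb{E}\,\bfe_k}\le\norm[2]{\bfI-\bfB}^k\norm[2]{\bfe_0} = \rho^k\norm[2]{\bfe_0}$ with $\rho = \norm[2]{\alpha^{-1}\mathbb{E}\,\bfB_k}$. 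The bound $\rho<1$ follows from \ref{lem:B}, which gives $\bfB$ positive definite with $\norm[2]{\bfB}<1$, so all eigenvalues of $\bfI-\bfB$ lie in $(0,1)$.

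For part (ii), I would condition on the history, writing $\mathbb{E}_k$ for the conditional expectation given $\bfW_1,\dots,\bfW_{k-1}$, and expand $\mathbb{E}_k\norm[2]{\bfe_k}^2 = \bfe\t_{k-1}\mathbb{E}[(\bfI-\bfP_k)^2]\bfe_{k-1} + 2\,\bfe\t_{k-1}\mathbb{E}[\bfP_k\bfg_k] + \mathbb{E}\norm[2]{\bfg_k}^2$, using $\mathbb{E}\,\bfg_k=\bfzero$ to kill the term linear in $\bfg_k$. The contraction comes from the first term: with $(\bfI-\bfP_k)^2 = \bfI - 2\bfP_k + \bfP_k^2$ and the eigenvalue-wise bound $\bfP_k^2\preceq\tfrac{\alpha A_{\max}}{1+\alpha A_{\max}}\bfP_k$ (valid since $\mu\le A_{\max}$), taking expectations gives $\mathbb{E}[(\bfI-\bfP_k)^2]\preceq\bfI - \tfrac{2+\alpha A_{\max}}{1+\alpha A_{\max}}\bfB \preceq (1-\lambda_{\min}(\bfB)-c)\bfI$, and the identity $\tfrac{2+\alpha A_{\max}}{1+\alpha A_{\max}}\lambda_{\min}(\bfB) = \lambda_{\min}(\bfB)+c$ is exactly what produces the constant $c = \lambda_{\min}(\bfB)/(1+\alpha A_{\max})$. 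The residual is controlled through $\norm[2]{\bfg_k}\le\alpha\norm[2]{\bfA\t_k(\bfA_k\widehat{\bfx}-\bfb_k)}$, so $\norm[2]{\mathbb{E}[\bfP_k\bfg_k]}\le\alpha\sigma$ and the inhomogeneous terms are bounded, up to constants, by $\alpha^2\sigma^2$. Absorbing the cross term by Young's inequality, $2\,\bfe\t_{k-1}\mathbb{E}[\bfP_k\bfg_k]\le(\lambda_{\min}(\bfB)-c)\norm[2]{\bfe_{k-1}}^2 + (\lambda_{\min}(\bfB)-c)^{-1}\norm[2]{\mathbb{E}[\bfP_k\bfg_k]}^2$, consumes the spare $(\lambda_{\min}(\bfB)-c)\ge0$ and degrades the rate exactly from $1-\lambda_{\min}(\bfB)-c$ to $1-2c$. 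This yields a scalar recursion $a_k\le(1-2c)\,a_{k-1} + D$ with $a_k = \mathbb{E}\norm[2]{\bfe_k}^2$ and $D = O(\alpha^2\sigma^2)$; unrolling and summing $\sum_{j\ge0}(1-2c)^j = (2c)^{-1}$, which converges because $0<1-2c<1$, gives the stated horizon $\alpha^2 c^{-1}\sigma^2$.

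The main obstacle is the cross term $\mathbb{E}[\bfP_k\bfg_k]$. Unlike the consistent-system (zero-residual) case, where the analogous term vanishes and one is left with a clean contraction, here $\bfP_k$ and $\bfg_k$ are correlated, being functions of the same sample $\bfW_k$, so $\mathbb{E}[\bfP_k\bfg_k]\ne\bfzero$ even though $\mathbb{E}\,\bfg_k=\bfzero$; re-centering the fluctuation about its conditional mean only relocates this correlation rather than removing it. Controlling it without spoiling the linear rate is the delicate point, and it is precisely the slack $c\le\lambda_{\min}(\bfB)$ in the contraction estimate that makes the Young-inequality absorption possible and pins down the final constant.
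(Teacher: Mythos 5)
Your part (i) is correct and coincides with the paper's argument (condition on the history, use $\mathbb{E}\,\bfg_k=\bfzero$ and $\mathbb{E}\,\bfP_k=\bfB$, unroll, invoke Lemma \ref{lem:B}(i)). The gap is in part (ii), and it sits exactly at the step you yourself flag as delicate: the absorption of the cross term $2\,\bfe\t_{k-1}\mathbb{E}[\bfP_k\bfg_k]$ by Young's inequality with weight $\lambda_{\min}(\bfB)-c$. That slack is far too thin to pay for the residual. Indeed $\lambda_{\min}(\bfB)-c=\lambda_{\min}(\bfB)\,\alpha A_{\max}/(1+\alpha A_{\max})$, and since $\lambda_{\min}(\bfB)\leq\lambda_{\max}(\bfB)\leq \alpha A_{\max}/(1+\alpha A_{\max})$, this slack is $O(\alpha^{2})$ as $\alpha\to0$. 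Combining your bound $\norm[2]{\mathbb{E}[\bfP_k\bfg_k]}\leq\alpha\sigma$ with the weight $(\lambda_{\min}(\bfB)-c)^{-1}$ and the geometric sum $(2c)^{-1}$, the cross term alone contributes
$\alpha^{2}\sigma^{2}/\bigl(2c\,(\lambda_{\min}(\bfB)-c)\bigr)
=\alpha\sigma^{2}(1+\alpha A_{\max})^{2}/\bigl(2A_{\max}\lambda_{\min}(\bfB)^{2}\bigr)$
to the horizon, which scales like $1/\alpha$ as $\alpha\to0$ because $\lambda_{\min}(\bfB)=O(\alpha)$. The claimed horizon $\alpha^{2}c^{-1}\sigma^{2}$ scales like $\alpha$ and vanishes as $\alpha \to 0$ --- a feature the paper explicitly relies on in its discussion after the theorem --- so your statement ``$D=O(\alpha^{2}\sigma^{2})$'' is false: the constant hidden in the $O$ is $(\lambda_{\min}(\bfB)-c)^{-1}$, which is $\alpha$-dependent and unbounded. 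Even if you deploy your own sharper fact $\norm[2]{\bfP_k}\leq \alpha A_{\max}/(1+\alpha A_{\max})$ inside $\mathbb{E}[\bfP_k\bfg_k]$, you only recover the correct $\alpha$-scaling up to a factor of order $MA_{\max}/A_{\min}$, not the exact constant $\alpha^{2}c^{-1}$ asserted in the theorem.

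The repair --- and it is what the paper does --- is never to pair $\bfe_{k-1}$ directly with $\mathbb{E}[\bfP_k\bfg_k]$. Instead, bound the squared update pathwise, before any expectation: $\norm[2]{\bfP_k\bfe_{k-1}+\bfg_k}^{2}\leq 2\norm[2]{\bfP_k\bfe_{k-1}}^{2}+2\norm[2]{\bfg_k}^{2}$, i.e.\ apply Young's inequality to the pair $(\bfP_k\bfe_{k-1},\bfg_k)$ rather than to $(\bfe_{k-1},\mathbb{E}[\bfP_k\bfg_k])$. Then no term $\mathbb{E}[\bfP_k\bfg_k]$ ever appears: the linear term $-2\,\bfe\t_{k-1}(\bfP_k\bfe_{k-1}+\bfg_k)$ has conditional expectation exactly $-2\,\bfe\t_{k-1}\bfB\,\bfe_{k-1}$ by the definition of $\widehat\bfx$, and Lemma \ref{lem:B}(iii) gives $\norm[2]{\bfP_k\bfe_{k-1}}^{2}\leq\tfrac{\alpha A_{\max}}{1+\alpha A_{\max}}\,\bfe\t_{k-1}\bfP_k\bfe_{k-1}$, so the quadratic terms combine to $-\tfrac{2}{1+\alpha A_{\max}}\,\bfe\t_{k-1}\bfB\,\bfe_{k-1}\leq-2c\norm[2]{\bfe_{k-1}}^{2}$, while the inhomogeneous term is $2\alpha^{2}\,\mathbb{E}\norm[2]{\bfA\t_k(\bfA_k\widehat\bfx-\bfb_k)}^{2}$; unrolling yields the rate $1-2c$ and the horizon $\alpha^{2}c^{-1}\mathbb{E}\norm[2]{\bfA\t_k(\bfA_k\widehat\bfx-\bfb_k)}^{2}$ exactly. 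The point is that the cross term is bought with $\norm[2]{\bfP_k\bfe_{k-1}}^{2}$, an object already of size $O(\alpha)\,\bfe\t_{k-1}\bfP_k\bfe_{k-1}$, rather than with the $O(\alpha^{2})$ margin $\lambda_{\min}(\bfB)-c$. (One caveat you share with the paper: both arguments produce the second moment $\mathbb{E}\norm[2]{\bfA\t_k(\bfA_k\widehat\bfx-\bfb_k)}^{2}$ rather than $\sigma^{2}=(\mathbb{E}\norm[2]{\bfA\t_k(\bfA_k\widehat\bfx-\bfb_k)})^{2}$, which by Jensen's inequality is the larger quantity, so the horizon as literally stated is not quite what either proof delivers.)
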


 \medskip
 The first part of the theorem shows that as $k\to \infty$,  $\bfx_k$ is an asymptotically unbiased estimator of $\widehat{\bfx}$ with a linear convergence rate.
 The second part shows linear convergence of the $L^2$-error up to a convergence horizon.  For the case where $\alpha \to 0$, the constant in the first term of \eqref{eq:horizon} approaches one,  indicating a slowing linear convergence rate, while the second term in \eqref{eq:horizon} goes to zero, i.e., the convergence horizon gets smaller. This is because $\alpha^2/\lambda_{\text{min}}\left(\bfB\right) \rightarrow 0$ as $\alpha \rightarrow 0$.

 Having shown that $\bfx_k$ converges to $\widehat{\bfx}$ in $L^2$ as $k\to\infty$ and $\alpha\to 0$, the next question is how much $\widehat{\bfx}$ differs from the LS solution $\bfx_{\text{LS}}$. To answer this question we let
 $\bfP_{\bfA} = \bfA(\bfA^\top\bfA)^{-1}\bfA^\top$ and $\bfQ_{\bfA} = \bfI-\bfP_{\bfA}$ be, respectively, the orthogonal projections onto the column space of $\bfA$ and
 its orthogonal complement. We then have the following equivalent definition of $\widehat{\bfx}$:
 \begin{eqnarray}
 \widehat{\bfx} &=& \argmin_\bfx \norm[2]{\mathbb{E}\,\bfB_{k}\bfA\t_{k}\bfA_{k}\left(\bfx-\bfx_{\text{LS}})-
 \mathbb{E}\,\bfB_{k}\bfA\t_{k}\bfW_{k}^\top\bfQ_{\bfA}\bfb\right)}^{2}\nonumber\\
 &=& \bfx_{\text{LS}} + \bfB^{-1}\,(\,\mathbb{E}\,\bfB_{k}\bfA\t_{k}\bfW_{k}^\top\,)\,\bfQ_{\bfA}\bfb.\label{eq:xh-xls}
 \end{eqnarray}
 In particular, $\widehat{\bfx} = \bfx_{\text{LS}}$ when $\bfb$ belongs to the column space of $\bfA$. The following result
 provides a bound for $\norm[2]{\widehat{\bfx}-\bfx_{\rm LS}}$.

 \begin{lemma}\label{thm:bias}
 If $\bfA \in \mathbb{R}^{m \times n}$ has full column-rank, then
 \begin{equation*}
     \norm[2]{\widehat{\bfx}-\bfx_{\rm LS}} \leq \alpha\,\frac{M(1+\alpha A_{\min})A_{\max}}{(1+\alpha A_{\max})A_{\min}}  \, C\,\|\bfQ_{\bfA}\bfb\|_2,
 \end{equation*}
 where $C = \mathbb{E}\,\|\bfA_k^\top\bfW_k\|_2 + \|(\bfA^\top\bfA)^{-1}\|_2\,\|\bfA\|_2\,\mathbb{E}\,\|\bfA_k^\top\bfA_k\|_2$.
 \end{lemma}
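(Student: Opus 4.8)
The plan is to start from the closed form already derived in \eqref{eq:xh-xls}, namely $\widehat{\bfx}-\bfx_{\text{LS}}=\bfB^{-1}\bfG\,\bfQ_{\bfA}\bfb$ with $\bfG:=\mathbb{E}\,\bfB_k\bfA\t_k\bfW\t_k$, and to bound the resulting product of factors. Writing $\bfQ_{\bfA}=\bfQ_{\bfA}^2$ and regarding $\bfG\bfQ_{\bfA}$ as an operator applied to the vector $\bfQ_{\bfA}\bfb$, submultiplicativity gives $\norm[2]{\widehat{\bfx}-\bfx_{\text{LS}}}\le\norm[2]{\bfB^{-1}}\,\norm[2]{\bfG\bfQ_{\bfA}}\,\norm[2]{\bfQ_{\bfA}\bfb}$. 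The last factor is exactly the one in the statement, so the task reduces to showing $\norm[2]{\bfB^{-1}}\,\norm[2]{\bfG\bfQ_{\bfA}}\le \alpha\,\frac{M(1+\alpha A_{\min})A_{\max}}{(1+\alpha A_{\max})A_{\min}}\,C$.

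The first thing to notice is that the naive estimate $\norm[2]{\bfG\bfQ_{\bfA}}\le\mathbb{E}\norm[2]{\bfB_k\bfA\t_k\bfW\t_k\bfQ_{\bfA}}\le\alpha\,C$ (using $\norm[2]{\bfB_k}=\alpha$) is too lossy: combined with $\norm[2]{\bfB^{-1}}=\mathcal{O}(\alpha^{-1})$ it yields an $\mathcal{O}(1)$ bound, whereas the claimed estimate is $\mathcal{O}(\alpha)$ and must vanish as $\alpha\to0$ (consistent with $\widehat{\bfx}\to\bfx_{\text{LS}}$). The crucial maneuver, which I expect to be the main obstacle, is to expose a cancellation. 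Using the resolvent identity $\bfB_k=\alpha\bfI-\alpha\bfB_k\bfA\t_k\bfA_k$ together with the two structural facts $\mathbb{E}\,\bfW_k\bfW\t_k=\tfrac1M\bfI$ and $\bfA\t\bfQ_{\bfA}=\bfzero$, the order-$\alpha$ piece of $\bfG\bfQ_{\bfA}$ vanishes, since $\mathbb{E}\,\alpha\bfA\t_k\bfW\t_k\bfQ_{\bfA}=\tfrac{\alpha}{M}\bfA\t\bfQ_{\bfA}=\bfzero$. This leaves $\bfG\bfQ_{\bfA}=-\alpha\,\mathbb{E}\,\bfB_k\bfA\t_k\bfA_k\,\bfA\t_k\bfW\t_k\bfQ_{\bfA}$, which is genuinely $\mathcal{O}(\alpha^2)$.

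Next I would bound the two factors inside this expectation. The curvature factor $\bfB_k\bfA\t_k\bfA_k$ is a function of $\bfA\t_k\bfA_k$ with eigenvalues $\alpha\mu/(1+\alpha\mu)\le \alpha A_{\max}/(1+\alpha A_{\max})$, so $\norm[2]{\bfB_k\bfA\t_k\bfA_k}\le \alpha A_{\max}/(1+\alpha A_{\max})$ deterministically and can be pulled out of the expectation. For the remaining factor I rewrite $\bfW\t_k\bfQ_{\bfA}=\bfW\t_k-\bfA_k(\bfA\t\bfA)^{-1}\bfA\t$, so that $\bfA\t_k\bfW\t_k\bfQ_{\bfA}=\bfA\t_k\bfW\t_k-\bfA\t_k\bfA_k(\bfA\t\bfA)^{-1}\bfA\t$; the triangle inequality then produces precisely the two summands defining $C$, namely $\mathbb{E}\norm[2]{\bfA\t_k\bfW\t_k}$ and $\norm[2]{(\bfA\t\bfA)^{-1}}\,\norm[2]{\bfA}\,\mathbb{E}\norm[2]{\bfA\t_k\bfA_k}$. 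Assembling these gives $\norm[2]{\bfG\bfQ_{\bfA}}\le \frac{\alpha^2 A_{\max}}{1+\alpha A_{\max}}\,C$.

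Finally I must control $\norm[2]{\bfB^{-1}}=1/\lambda_{\min}(\bfB)$. The last ingredient is the spectral lower bound $\lambda_{\min}(\bfB)\ge \frac{\alpha A_{\min}}{M(1+\alpha A_{\min})}$, which I would obtain from a quantitative form of \ref{lem:B}; multiplying $\norm[2]{\bfB^{-1}}\le\frac{M(1+\alpha A_{\min})}{\alpha A_{\min}}$ by the $\mathcal{O}(\alpha^2)$ estimate above and simplifying reproduces exactly the stated prefactor $\alpha\,\frac{M(1+\alpha A_{\min})A_{\max}}{(1+\alpha A_{\max})A_{\min}}$. I anticipate two delicate points: (i) carrying out the order-$\alpha$ cancellation cleanly at the matrix level, which is what injects the extra factor of $\alpha$ and prevents the bound from being vacuous for small damping; and (ii) securing the lower bound on $\lambda_{\min}(\bfB)$ with the sampled constant $A_{\min}$ rather than $\lambda_{\min}(\bfA\t\bfA)$, which is precisely where the definition of $A_{\min}$ as a minimum over the $M$ realizations must be exploited.
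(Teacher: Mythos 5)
Your proposal is correct and reaches the identical constant, and its skeleton matches the paper's: both start from \eqref{eq:xh-xls}, both exploit $\bfA\t\bfQ_{\bfA}=\bfzero$ together with the resolvent identity $\bfB_k=\alpha\bfI-\alpha\bfB_k\bfA_k\t\bfA_k$ to make the operator acting on $\bfQ_{\bfA}\bfb$ genuinely $\mathcal{O}(\alpha^2)$, and both close with parts (iii) and (iv) of Lemma \ref{lem:B}. Where you genuinely differ is the middle decomposition. The paper works at the operator level: it bounds $\norm[2]{\bfB^{-1}\bfC-(\bfA\t\bfA)^{-1}\bfA\t}$ (where $\bfC=\mathbb{E}\,\bfB_k\bfA_k\t\bfW_k\t$ is your $\bfG$) by splitting off the piece $\alpha\beta\bfB^{-1}\bfA\t-(\bfA\t\bfA)^{-1}\bfA\t$, which forces a second application of the resolvent identity through the auxiliary estimate $\norm[2]{\alpha\beta\bfA\t\bfA-\bfB}\le\frac{\alpha^2A_{\max}}{1+\alpha A_{\max}}\,\mathbb{E}\norm[2]{\bfA_k\t\bfA_k}$. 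You instead keep $\bfQ_{\bfA}$ attached, annihilate the $\mathcal{O}(\alpha)$ term inside the expectation, and expand $\bfW_k\t\bfQ_{\bfA}=\bfW_k\t-\bfA_k(\bfA\t\bfA)^{-1}\bfA\t$ pathwise, so that a single triangle inequality inside the expectation produces both summands of $C$; this is slightly shorter and avoids comparing $\bfB$ with $\alpha\beta\bfA\t\bfA$ altogether. What the paper's bookkeeping buys in exchange is an explicit quantitative statement that $\bfB$ is $\mathcal{O}(\alpha^2)$-close to the scaled Hessian $\alpha\beta\bfA\t\bfA$, a fact of some independent interest. A small bonus of your route: it makes clear that the first term of $C$ should be read as $\mathbb{E}\norm[2]{\bfA_k\t\bfW_k\t}$, consistent with the paper's own derivation; the expression $\bfA_k\t\bfW_k$ appearing in the lemma statement is a dimensional typo.
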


 \medskip
 It  is important to notice the relationship between the damping parameter $\alpha$ and the
 upper bound in Lemma \ref{thm:bias}. The bound is smaller for  smaller values of $\alpha$, which makes sense in light of the asymptotic property that $\bfx_{k} \rightarrow \bfx_{\text{LS}}$ a.s.~for a decaying damping parameter $\alpha$
 (see \cite{chung2017stochastic}).  However, there is a trade-off between the convergence rate and the precision of iterates (i.e., the bias) that depends on $\alpha$:
  as $\alpha \to 0$, we get more accurate approximations, i.e., $\widehat\bfx \to \bfx_{\text{LS}}$ in $L^2$.
 On the other hand, for larger  $\alpha$ the convergence is faster at the cost of a larger convergence horizon.

 In summary, we have shown that with a fixed damping parameter, the \texttt{slimLS} iterates will converge in $L^2$ linearly to within a horizon of $\widehat \bfx$, and the expected value of the iterates converges to $\widehat \bfx$. A pictorial illustration of this convergence behavior is provided in Figure \ref{fig:horizon}.
 \begin{figure}[bt]
 \begin{center}
   \includegraphics[width=.9\textwidth]{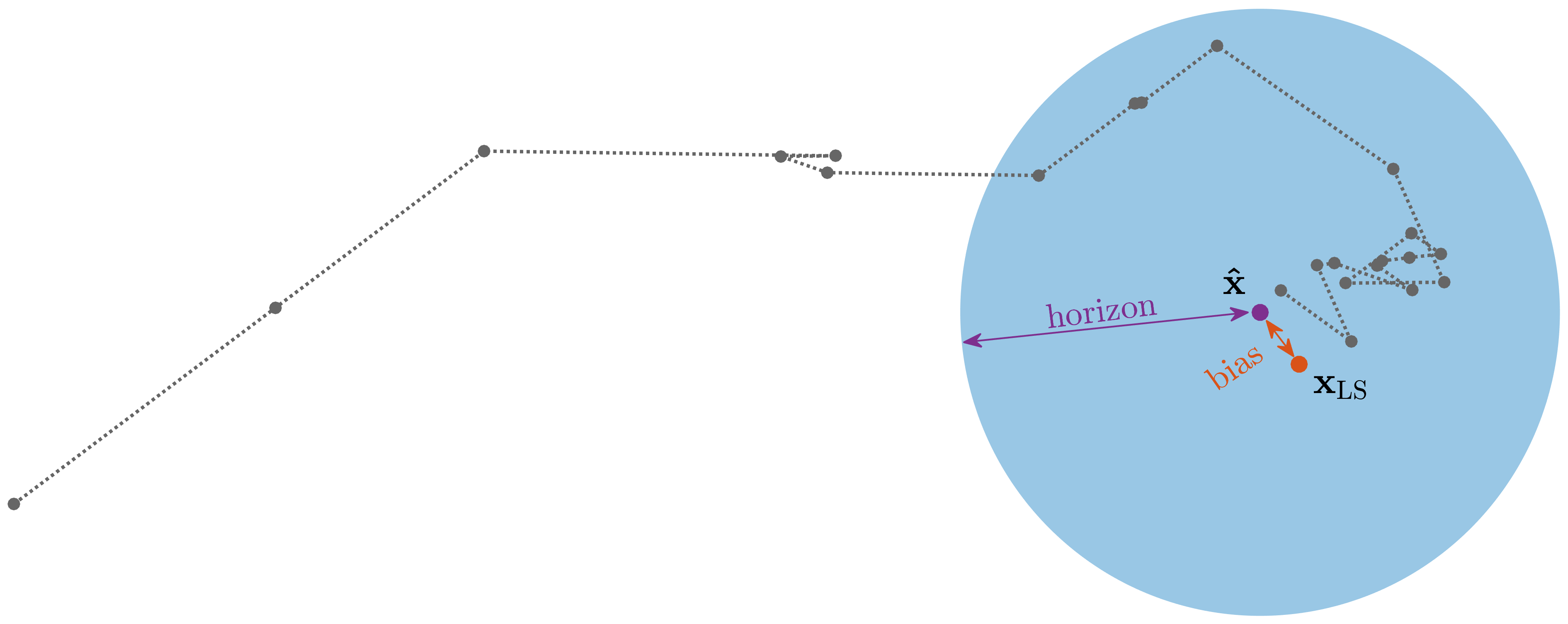}
 \caption{Illustration of the convergence horizon for \texttt{slimLS}. The gray dotted line contains the iterates $\bfx_k$ and the convergence horizon is denoted by the blue disk. By Theorem \ref{thm:firstmoment}, the \texttt{slimLS} iterates converge to within a convergence horizon of $\widehat\bfx$. $\bfx_{\rm LS}$ is provided for comparison with $\widehat\bfx$ (see Lemma \ref{thm:bias}).}
 \label{fig:horizon}
 \end{center}
 \end{figure}

 This trade-off between quick initial convergence that comes with using a constant damping parameter at the cost of solution accuracy has been observed in related stochastic optimization methods in the literature, see e.g., \cite{benveniste2012, andersen2014generalized}.
 It has also been observed that more accurate solutions can be obtained using a decaying damping parameter, but then convergence can be quite slow (sub-linear) \cite{bottou2005line, nemirovski2009robust}. Thus, it is often practical to use a constant damping parameter to obtain quick initial convergence and then switch to a decaying parameter to obtain higher accuracy.

 \section{Numerical results}\label{sec:num}
 In this section we first illustrate the convergence behavior of our proposed \texttt{slimLS} method in a small simulation study. The goal of the first experiment is to illustrate how different memory levels and damping parameters affect the convergence of \texttt{slimLS}. We also compare \texttt{slimLS} to existing row-action methods and provide a numerical investigation into the sensitivity towards the damping parameter/step size.
 Then we discuss some of the computational considerations when solving massive problems and investigate the performance of our method on very large tomographic reconstruction problems.

 \subsection{An Illustrative Example}\label{sec:num1}
 In the first numerical experiment we use a smaller example to illustrate some of the key features of the \texttt{slimLS} method. We let $\bfA \in \bbR^{1,000\times 100}$ have random entries from a standard normal distribution. We further assume that $\bfx_{\rm true} = [1,\ldots,1]\t$ and the simulated observations $\bfb$ are generated as in~\eqref{eq:inverseproblem}
 where $\bfepsilon$ is white noise with noise level $1\%$; that is, $\norm[2]{\bfepsilon}/\norm[2]{\bfA \bfx_{\true}} = 0.01$. The matrix $\bfA$ is assumed to be sampled in $M = 100$ blocks, each with block size $\ell = 10$, which corresponds to sampling matrices
 $\bfW^{(i)} = \left[\bfzero_{100\times 10(i-1)}, \bfI, \bfzero_{100\times 10(100-i)}\right]\t.$

 First we illustrate the convergence behavior investigated in Theorem \ref{thm:firstmoment} for different constant damping parameters $\alpha$.
 In Figure \ref{fig:stepsize} we provide relative reconstruction errors computed as $\norm[2]{\bfx_k - \widehat\bfx}/\norm[2]{\widehat\bfx}$ for various damping factors from $\alpha = 0.001$ to $\alpha = 10$ on a log-log scale.  We repeatedly run \texttt{slimLS} with random sampling for $100$ epochs and with memory level $r=0$.
 We observe that larger values of $\alpha$ have favorable initial convergence, but then stabilize at a larger relative error.  On the other hand, smaller values of $\alpha$ have a slower initial convergence, but to a smaller relative error.  This illustrates the trade-off between fast initial convergence and solution accuracy, as discussed in Section~\ref{sec:conv}.
 \begin{figure}[b]
   \begin{center}
     \includegraphics[width=.9\textwidth]{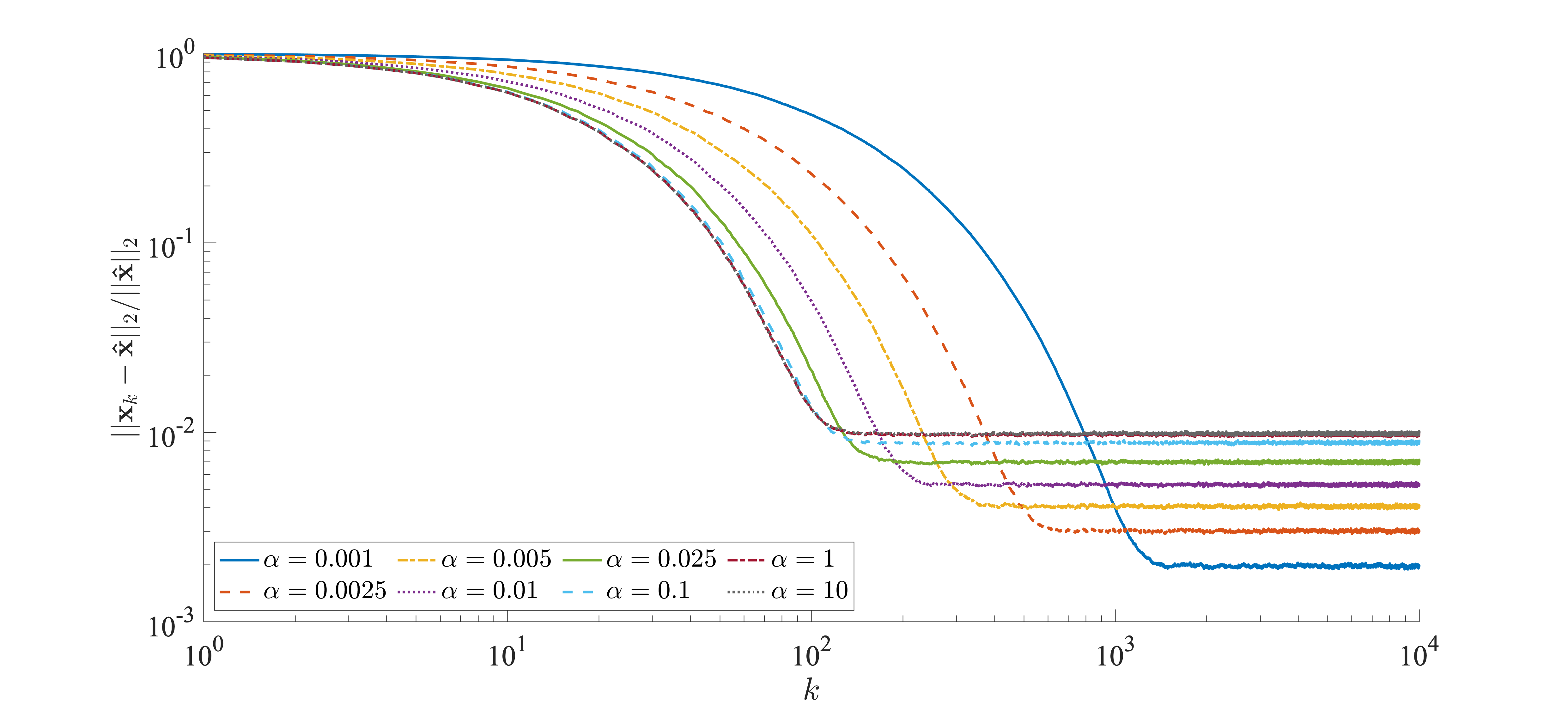}
     \caption{Comparison of median reconstruction errors, over $100$ runs and relative to $\widehat{\bfx}$, for different (fixed) damping parameters $\alpha$ in \texttt{slimLS} on a log-log scale.}\label{fig:stepsize}
   \end{center}
 \end{figure}
 Furthermore, for various $\alpha$ we provide the relative difference between $\widehat \bfx$ and $\bfx_{\rm LS}$ in Figure~\ref{fig:bound}. We notice that for small $\alpha$ the relative difference is within machine precision while slowly increasing for $\alpha> 10^{-1}$.
 \begin{figure}[bt]
   \begin{center}
 \includegraphics[width=.9\textwidth]{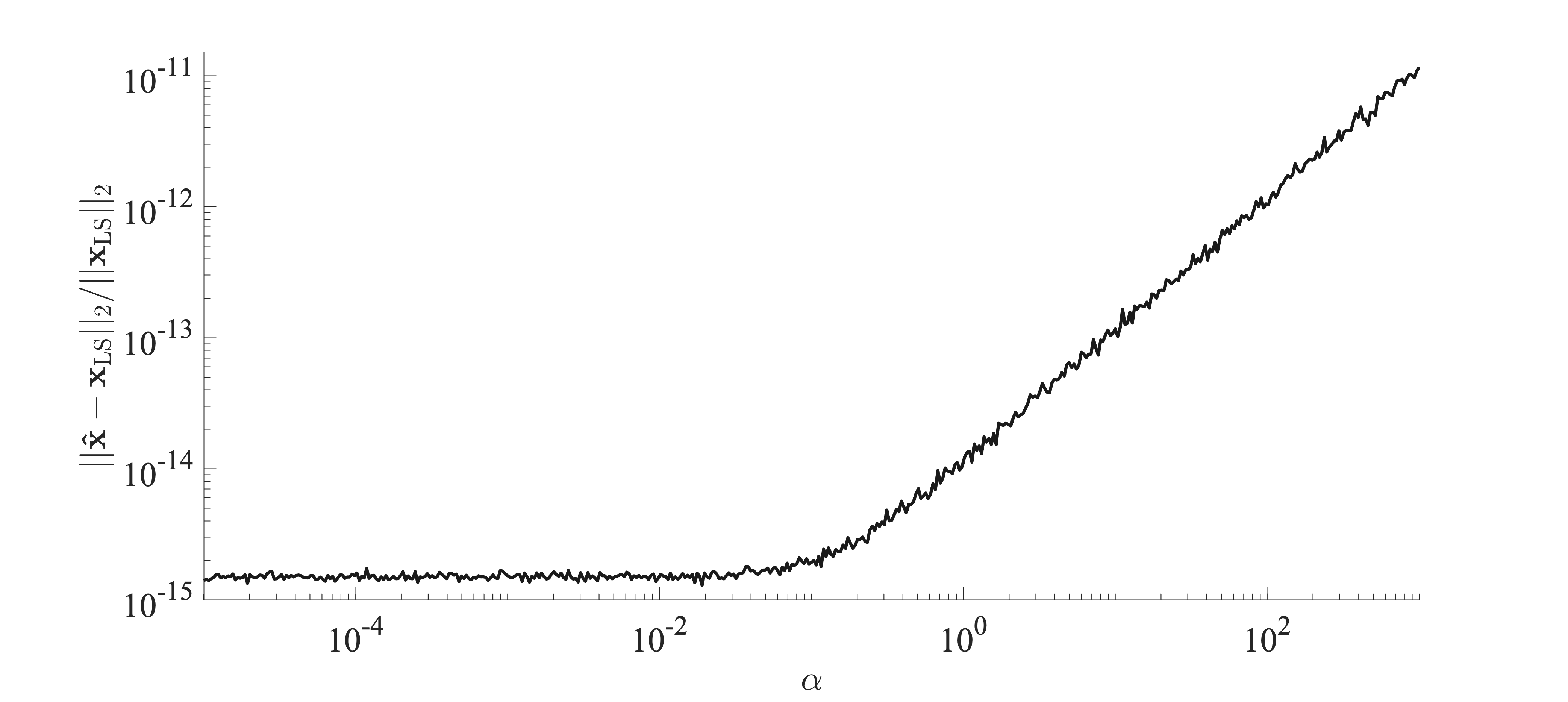}
     \caption{Relative difference between the weighted LS solution $\widehat\bfx$ and the desired LS solution $\bfx_{\rm LS}$ for various $\alpha$.\label{fig:bound}
     }
   \end{center}
 \end{figure}

 Next, we investigate how the initial convergence is affected by the choice of the memory parameter $r$. Here, we fix $\alpha=1$ and choose memory levels $r = 0, 2, 4, 6$, and $8$. We run our \texttt{slimLS} method for $20$ iterations and provide the median relative reconstruction errors for $100$ repeated runs in Figure~\ref{fig:memory}. The errors are computed with respect to the LS solution, i.e., $\norm[2]{\bfx_k - \bfx_\text{LS}}/\norm[2]{\bfx_\text{LS}}$.
 We empirically observe that with higher memory levels we get faster initial convergence.
 \begin{figure}[bt]
 \begin{center}
 \includegraphics[width=.9\textwidth]{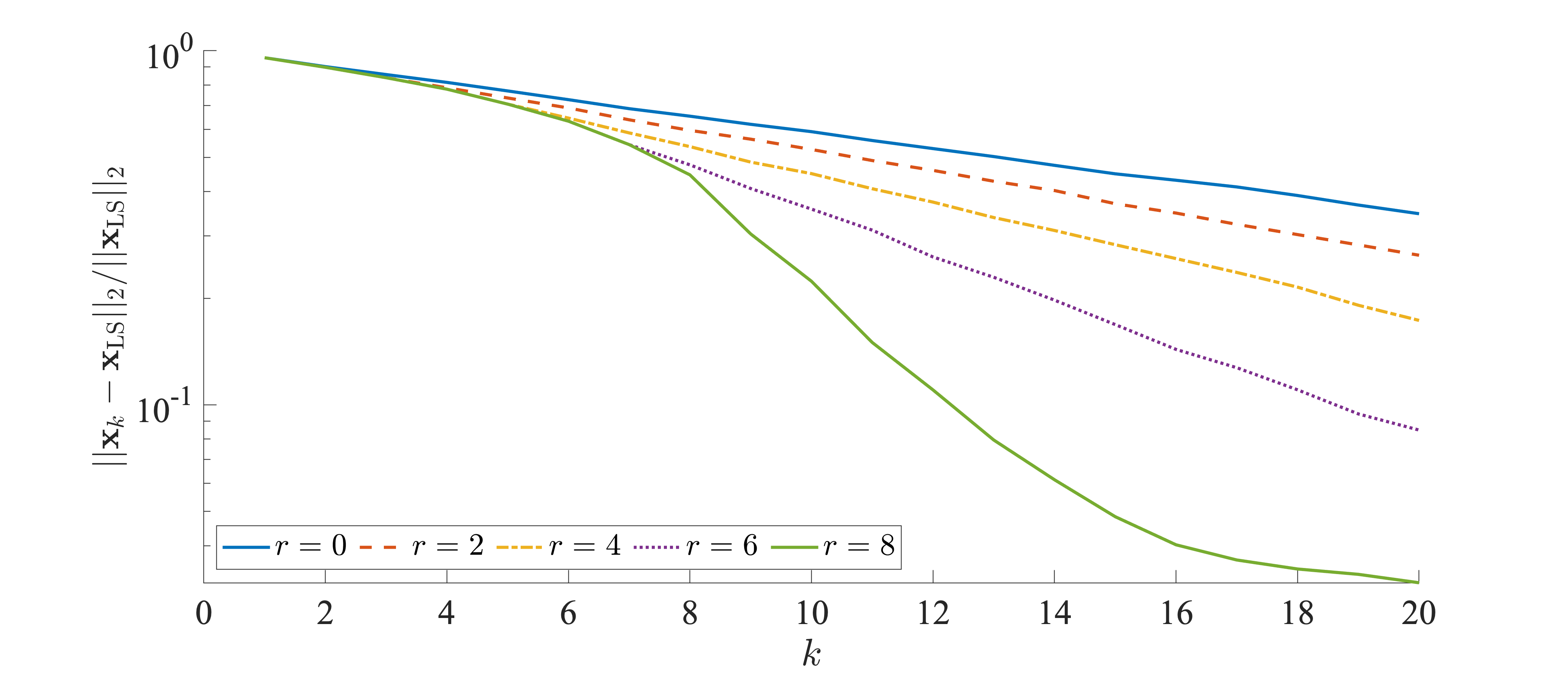}
 \caption{Comparison of median reconstruction errors, over $100$ runs and relative to $\bfx_{\rm LS}$, for different memory levels in \texttt{slimLS} with $\alpha=1$. }\label{fig:memory}
 \end{center}

 \end{figure}

 We also provide a comparison of \texttt{slimLS} with $r=0$ to other row-action methods, including the sampled or batch gradient \texttt{sg} method and the online limited memory BFGS \texttt{olbfgs} method \cite{mokhtari2015global} with memory level $10$ (i.e., storing the $10$ previously computed sampled gradient vectors).
 In particular, we are interested in the sensitivity of the algorithms with respect to the step size.
 For different constant step sizes/damping factors from $\alpha = 10^{-5}$ to $\alpha = 10^{3}$, we computed the reconstruction error norm at $k = 100$ (i.e., corresponding to one epoch) relative to $\widehat\bfx$.
 Although reconstruction errors could be computed relative to $\bfx_{\rm LS}$, we note that the relative error between $\bfx_{\rm LS}$ and $\widehat\bfx$ is negligible (see~Figure~\ref{fig:bound}).
 In Figure~\ref{fig:compareStepSizing} we provide the median relative reconstruction error norms along with the $5-95$th percentiles after repeating the experiment $100$ times. Notice that the plot is on a log-log scale.
  We observe that the \texttt{sg} method has just a tiny ``window'' of $\alpha$'s for which results have small relative reconstruction errors. The window for \texttt{olbfgs} is larger and is centered around step size $\alpha = 1$, which is expected, see \cite{bottou2018optimization}. However, compared to \texttt{sg} and \texttt{olbfgs}, the \texttt{slimLS} method provides good reconstructions for a much wider range of damping factors, which is a very attractive property of the \texttt{slimLS} method.
 \begin{figure}
   \begin{center}
     \includegraphics[width=.9\textwidth]{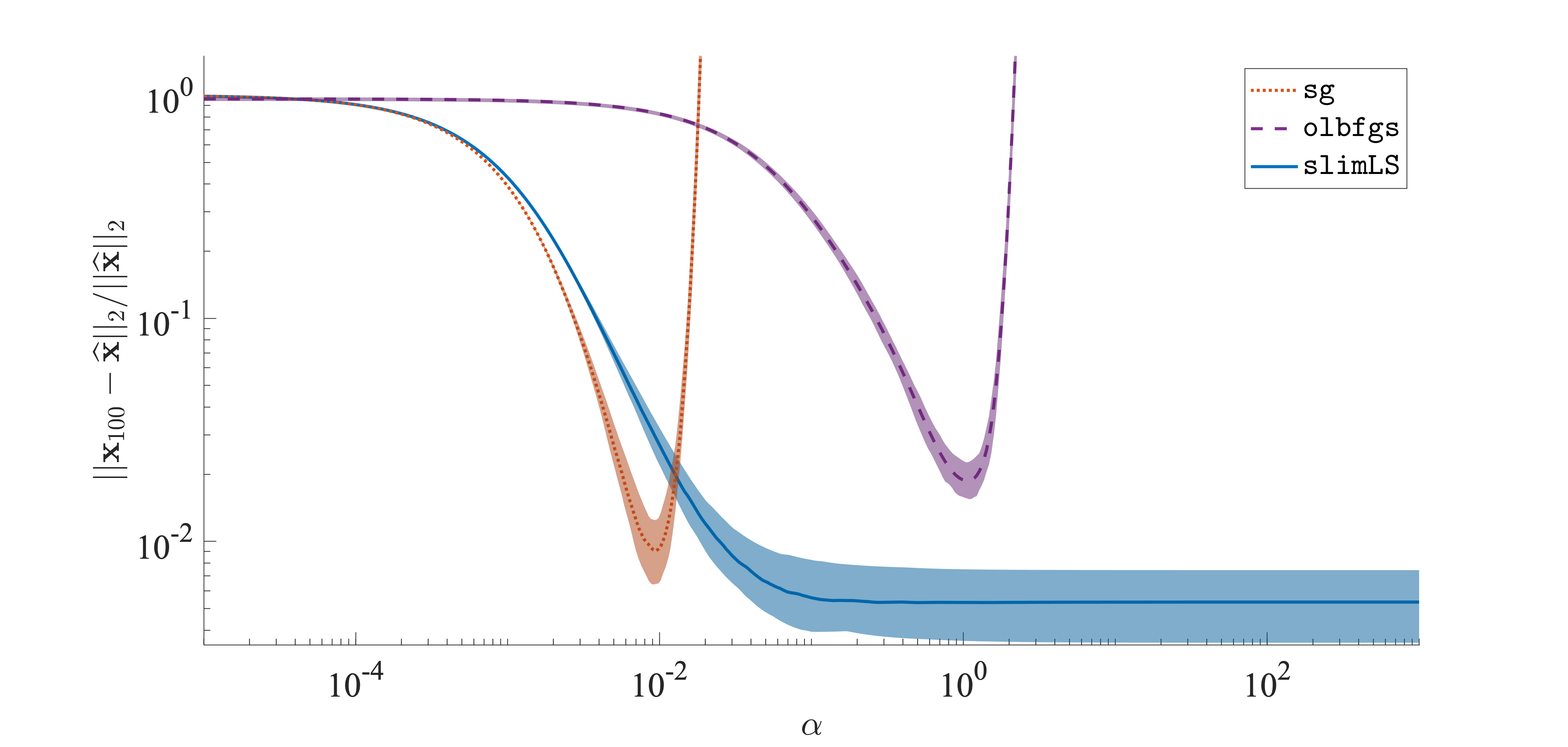}
     \caption{Comparison of median reconstruction errors along with the $5-95$th percentiles over $100$ runs and relative to $\widehat{\bfx}$ for different (fixed) step sizes in \texttt{slimLS}, sampled gradient, and online limited memory BFGS. All results correspond to accessing one epoch of the data.}\label{fig:compareStepSizing}
   \end{center}
 \end{figure}

 \subsection{Computational considerations}\label{sec:computation}
 Recall that the \texttt{slimLS} method can be interpreted as a row-action method, which by construction alleviates many of the computational bottlenecks (i.e., data access and memory requirements per iteration are significantly reduced).  However, for many realistic problems where $n$ is on the order of millions or billions (e.g., in tomography), the computational cost of each iteration can still be large.
 We remark that a noteworthy distinction of our numerical investigations compared to previously published works on row-action methods, such as \cite{Needell2014,andersen2014generalized, strohmer2009randomized}, is that we consider very large imaging problems with hundreds of millions of unknowns and focus on the initial convergence (one epoch) rather than the ``asymptotic'' convergence (hundreds of epochs) behavior.
 Next we address some considerations with respect to computational cost and comparisons with other methods.

 The \texttt{slimLS} iterates can be written as $\bfx_k = \bfx_{k-1} - \bfs_k$, where
 $$\bfs_k = \left(\alpha^{-1}_{k}\bfC_{k} + \bfM\t_{k}\bfM_{k}\right)^{-1} \bfA\t_{k}\left(\bfA_{k}\bfx_{k-1}-\bfb_{k}\right),$$
 where $\bfM_{k} = [\bfA_{k-r}\t, \ldots,  \bfA_{k}\t ]\t.$
 Thus, each iteration consists of two main steps,
 \begin{enumerate}
   \item accessing model block $\bfA_k$ and corresponding data $\bfb_k$, and
   \item computing the update step $\bfs_k$.
 \end{enumerate}

 The computational costs for the first step are often overlooked, but since data are usually stored on a hard-drive, data access can be time-consuming.  Furthermore, depending on the application, constructing the corresponding matrix block $\bfA_k$ can also be computationally expensive. In data-streaming problems one may not have control over when and which blocks of data become available at any given time.

 For the second step, solving for $\bfs_k$ can be done efficiently by first noticing that $\bfs_k$ is the solution to the LS problem,
 \begin{align} \label{eq:slimLSstep}
 \bfs_{k} = \argmin_\bfs \norm[2]{ \begin{bmatrix}
   \bfA_{k-r} \\ \vdots \\ \bfA_{k-1} \\ \bfA_k\\ \tfrac{1}{\sqrt{\alpha_k}}\bfL_k
 \end{bmatrix}\bfs - \begin{bmatrix}
   \bfzero \\ \vdots \\ \bfzero\\  \bfA_{k}\bfx_{k-1}-\bfb_{k} \\ \bfzero
 \end{bmatrix}}^2,
 \end{align}
 where $\bfC_k = \bfL_k\t \bfL_k$.
 Hence, any efficient LS solver that exploits the structure in \eqref{eq:slimLSstep} can be used to compute $\bfs_k$.
 Here, we utilize LSQR for damped LS, see \cite{PaSa82b}, where a very efficient implementation is available if $\bfL_k=\bfI$.  It is worth mentioning that another LS reformulation can be made to solve for $\bfx_k$ directly, where the right-hand-side becomes dense and will depend on $\alpha_k$ and $\bfL_k$.

 Next, we remark further on the choice of $\alpha_k$. For the \texttt{sg} method---as illustrated in Figure~\ref{fig:compareStepSizing}---an acceptable constant step size $\alpha_k$ is hard to come by and is often chosen such that $\alpha_k \ll 1$. For \texttt{olbfgs} we choose the ``natural'' constant step size $\alpha_k = 1$, with a possible exception at the early iterations. Since \texttt{olbfgs} is equivalent to {\tt sg} in the first iteration, \texttt{olbfgs} may suffer from large step sizes while building up its memory. To compensate for this we can ramp up the value of $\alpha_k$ in early iterations, e.g., $\alpha_k =  \frac{k \alpha}{r+1}$ for the first $r+1$ iterations for fixed $\alpha$ where $r$ is the memory parameter.

 In many structured problems and in particular for tomography problems---where each block matrix corresponds to a single projection image (i.e., one angle)---$\bfA_k$ is extremely sparse with the number of non-zero elements in $\bfA_k$ on the order of $n$. Note that in some cases, matrix $\bfA_k$ does not even need to be constructed, but function evaluations can be used within iterative methods \cite{golub2012matrix}. Hence, for extremely large-scale problems where memory storage becomes an issue, {\tt slimLS} can take advantage of any sparsity or structure. In contrast, the \texttt{olbfgs} method requires storing two vectors of length $n$ for each memory level, and these vectors are likely dense so the storage becomes cumbersome for large $n$.

 \subsection{Large-scale Tomographic Reconstruction}\label{sec:num2}
 Next we present two numerical experiments that demonstrate the performance of \texttt{slimLS} for solving massive tomography reconstruction problems.
 Tomography has become very important in many applications, including medical imaging, seismic imaging, and atmospheric imaging \cite{natterer2001mathematics,herman1993algebraic}. The goal in computerized tomography is to reconstruct the interior of an object given observed, exterior measurements.  However, recent advances in detector technology have led to faster scan speeds and the collection of massive amounts of data.  Furthermore, in dynamic or streaming data scenarios (e.g., in microCT reconstruction), partial reconstructions are needed to inform the data acquisition process.  The $\texttt{slimLS}$ method can be used to address both of these problems.  The first example we consider is a very large, limited angle 2D tomography reconstruction problem that is underdetermined, and the second example is a 3D streaming reconstruction problem that is ultimately overdetermined.

 For ill-posed problems such as tomography, semiconvergence of iterative methods is a concern whereby early iterates converge quickly to a good approximation of the solution but later iterates become contaminated with errors.  Iterative regularization techniques (i.e., early termination of the iterative process) are often used to obtain a reasonable solution \cite{hansen2010discrete}.
 For row-action methods applied to tomography problems, semiconvergence properties have been investigated \cite{elfving2014semi}, but due to notoriously slow convergence (after fast initial convergence), the ill effects of semiconvergence tend to appear only after multiple epochs of the data.
 Thus, we do not include additional regularization in the following results.

 \begin{figure}[b]
  \begin{center}
 \begin{tabular}{cc}
   \includegraphics[height=.5\textwidth]{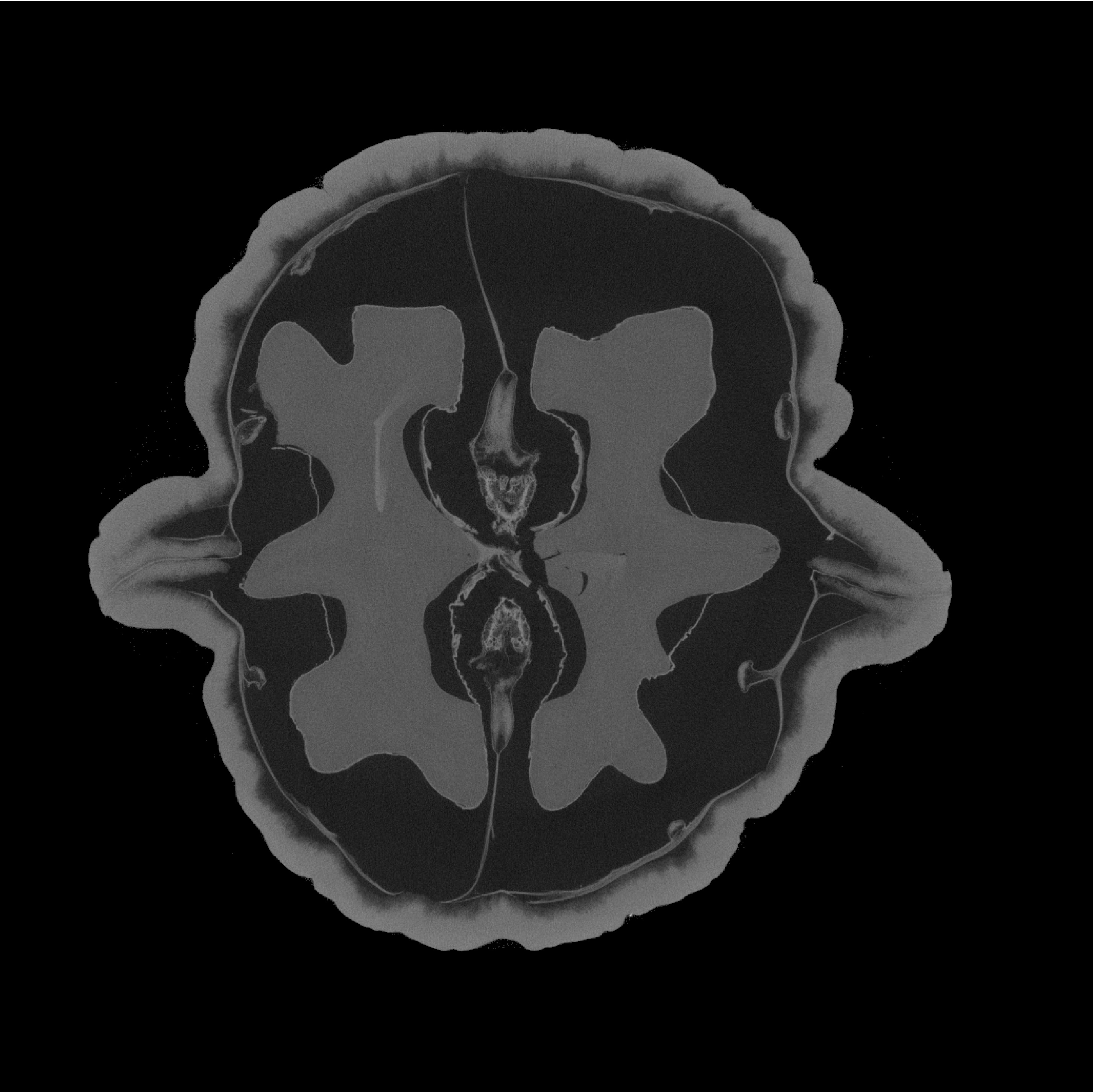}&
   \includegraphics[height=.5\textwidth]{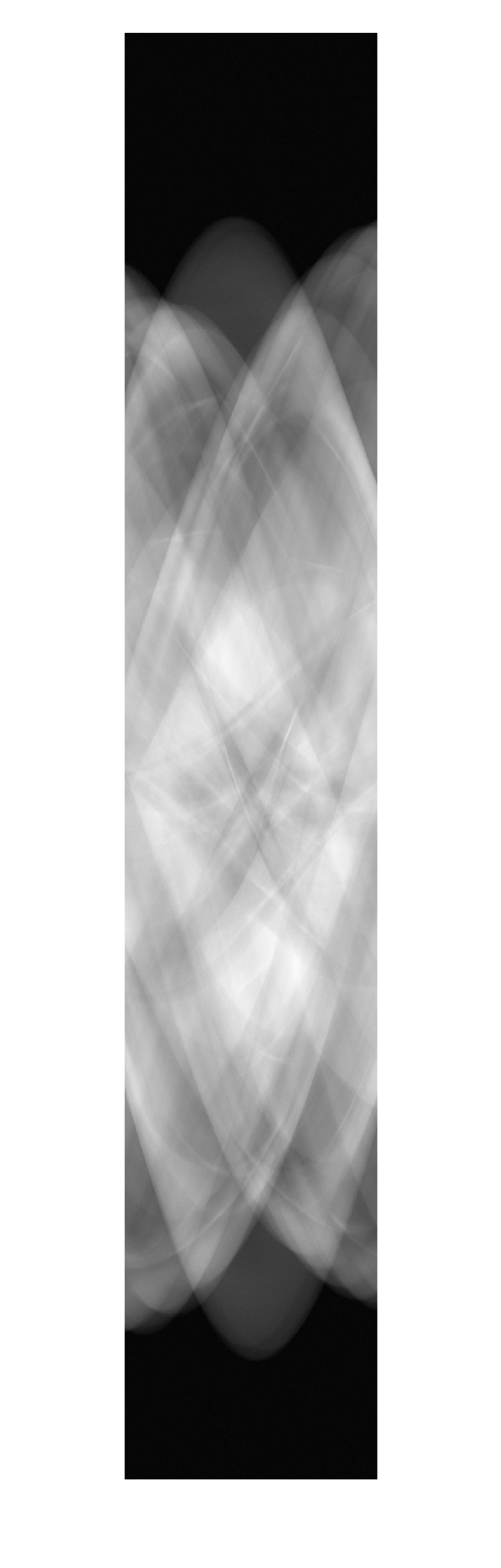}\\
   (a) & (b)
 \end{tabular}
   \end{center}
   \caption{Two-dimensional tomography problem with missing wedge of $60^\circ$.  True image of a walnut slice is provided in (a) and the observed sinogram in (b) corresponds to angles between $-60^\circ$ and $+60^\circ$ at $0.3$ degree steps.}\label{fig:tomo2d}
 \end{figure}

 \paragraph{Two-Dimensional Limited-Angle Tomography.}
 First we consider a parallel-beam x-ray tomography example, where the true image represents a cross-section of a walnut.  In \cite{hamalainen2015tomographic} the authors provide an image reconstruction computed from $1200$ projections using filtered back projection.  Our ``ground truth" image provided in Figure~\ref{fig:tomo2d}~(a) is a cleaned image of the filtered back projection reconstruction.  For this problem, $\bfx_\true\in \bbR^{2,296^2}$, and we simulate observations by taking $400$ projections at angles between $-60^\circ$ and $+60^\circ$ at $0.3$ degree steps, with $2,\!296$ rays for each angle.  This can be interpreted as a missing data problem where we have a missing wedge of $60^\circ$. In this example, $\bfA \in \bbR^{400\cdot2,296 \times 2,296^2}$.  The observed sinogram provided in Figure~\ref{fig:tomo2d}~(b) was generated as in \eqref{eq:inverseproblem} with noise level $0.01$.

Here $\bfW\t\bfA$ is random cyclic uniformly chosen from $400$ blocks of size $2,\!296 \times 2,\!296^2$.  We apply the \texttt{slimLS} method with memory level $r = 2$ and ramped-up damping parameter with $\alpha = 1$.
Relative reconstruction errors computed as $\norm[2]{\bfx_k - \bfx_\true}/\norm[2]{\bfx_\true}$ are provided in Figure~\ref{fig:tomo2d_err}. We provide comparisons to the \texttt{sg} method with step size $\alpha_k = 10^{-5}$ and the \texttt{olbfgs} method with memory level $20$ and ramped-up step size with $\alpha = 1$.  We found that both methods required a small initial step size to prevent reconstruction errors from getting very large.
 \begin{figure}
 \begin{center}
   \includegraphics[width=.8\textwidth]{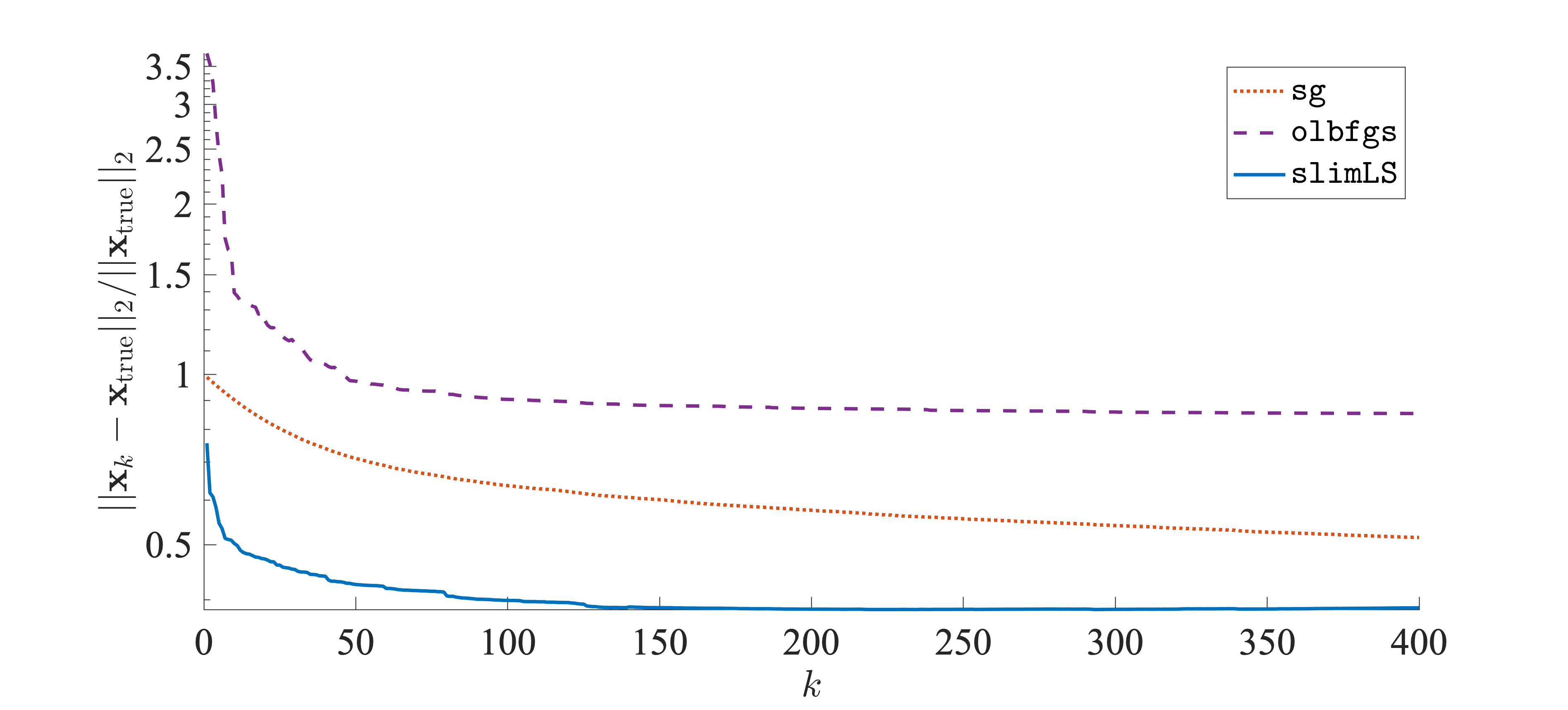}
 \caption{Relative reconstruction error norms at each iteration of \texttt{sg}, \texttt{olbfgs}, and \texttt{slimLS}.}\label{fig:tomo2d_err}
 \end{center}
 \end{figure}

 \begin{figure}[bth]
 \begin{center}
   \begin{tabular}{cccc}
       $\bfx_{\rm true}$ & {\tt sg} & {\tt olbfgs} & {\tt slimLS} \\
       \includegraphics[width=0.22\textwidth]{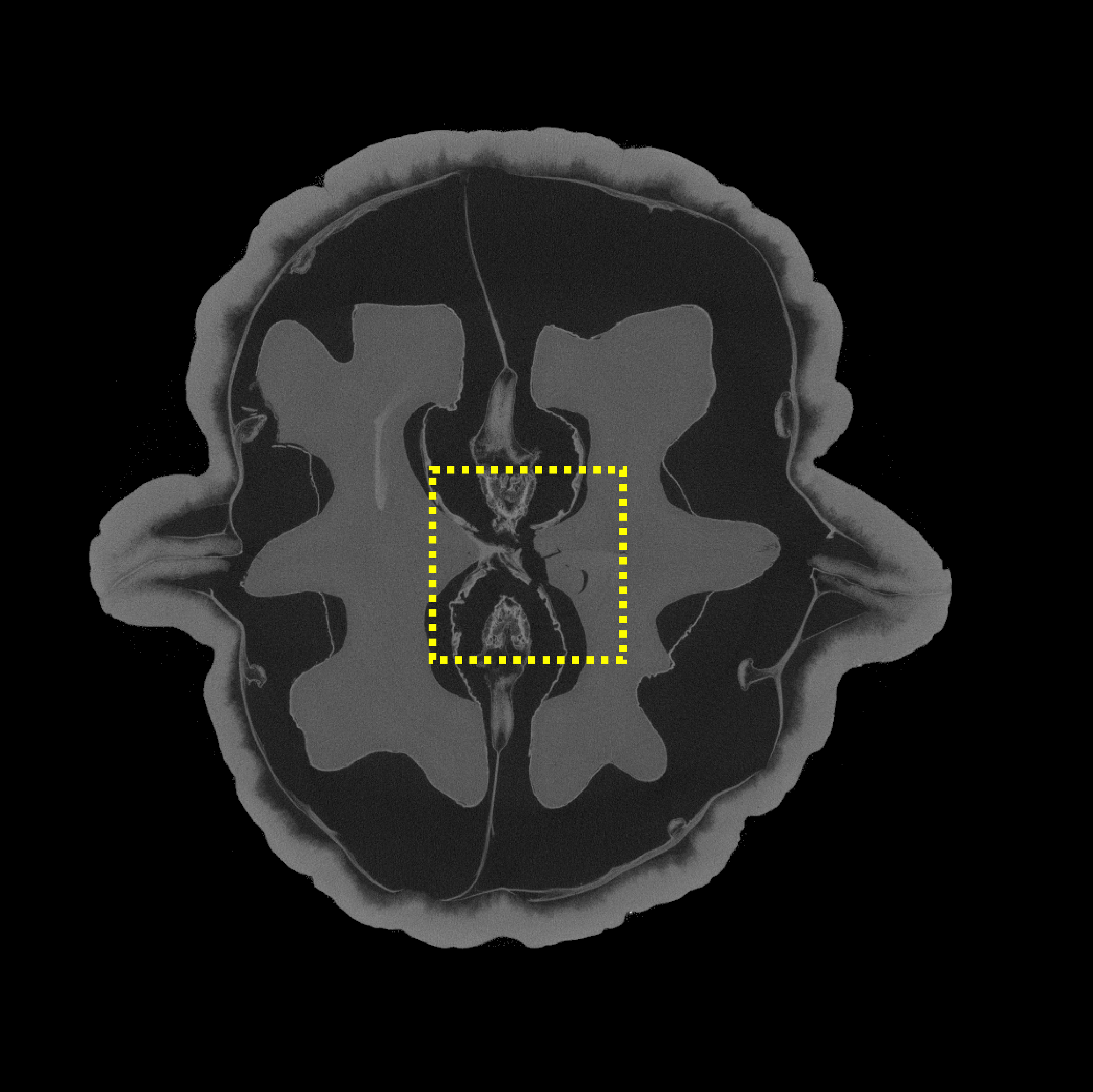}&
       \includegraphics[width=0.22\textwidth]{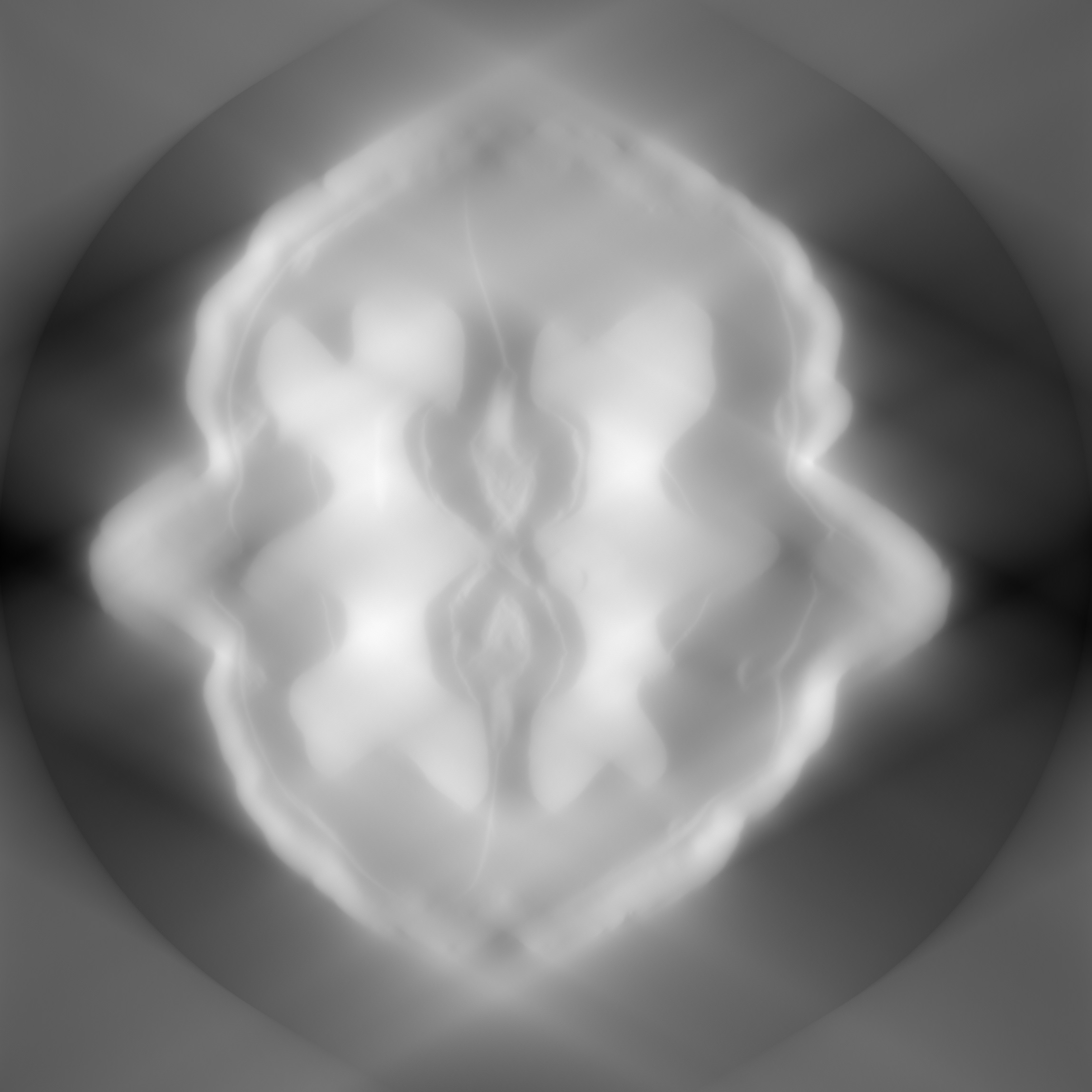} &
       \includegraphics[width=0.22\textwidth]{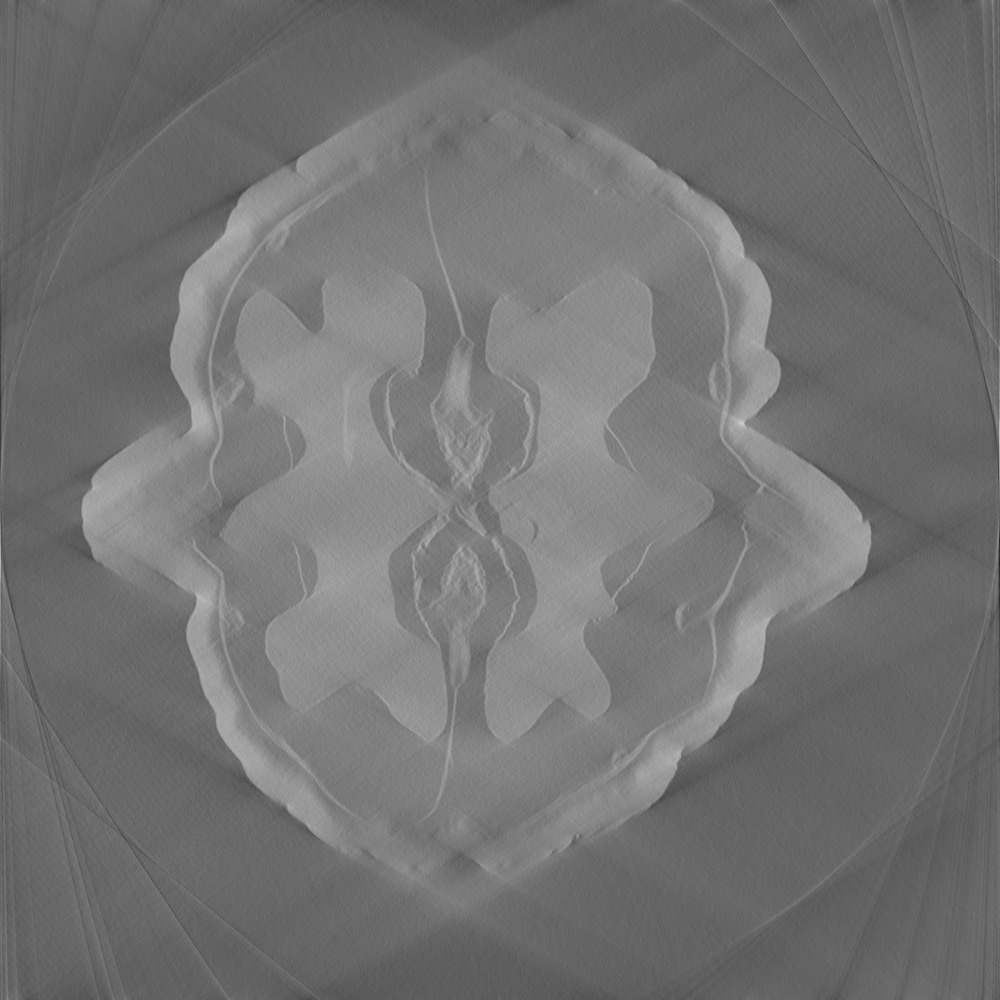}&
       \includegraphics[width=0.22\textwidth]{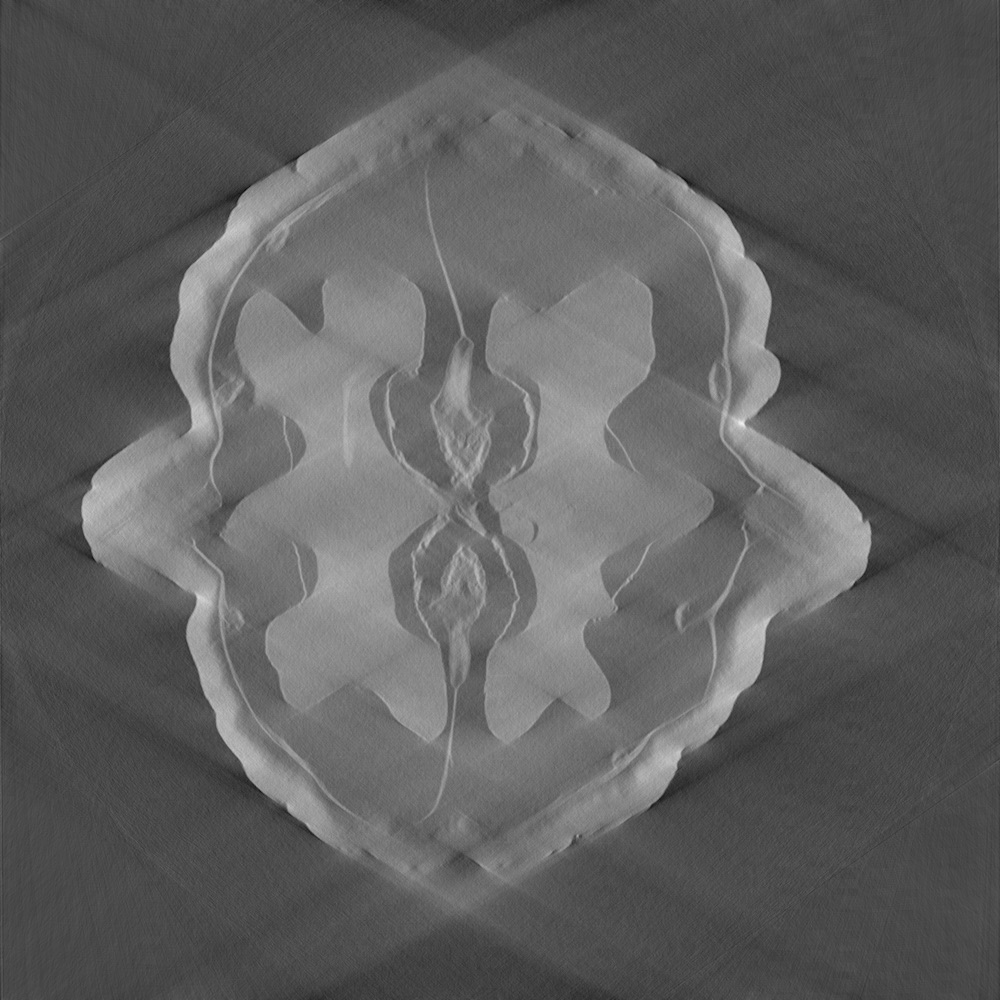}\\
       \includegraphics[width=0.22\textwidth]{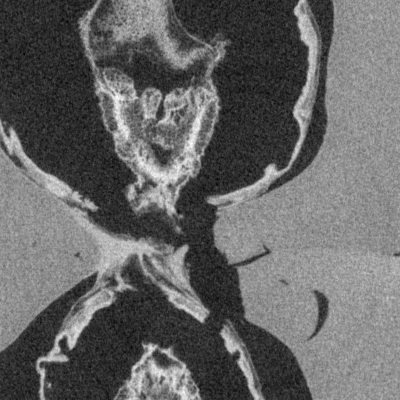}&
       \includegraphics[width=0.22\textwidth]{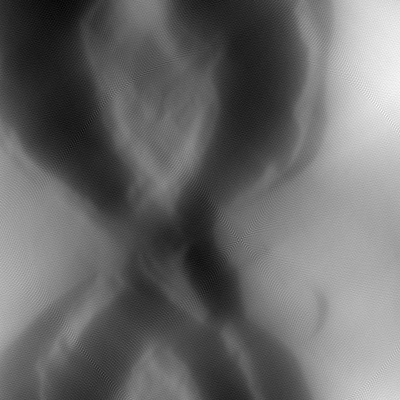} &
       \includegraphics[width=0.22\textwidth]{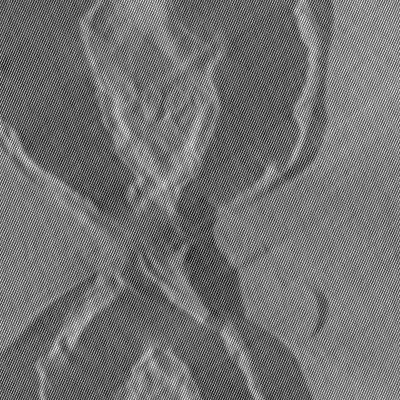}&
       \includegraphics[width=0.22\textwidth]{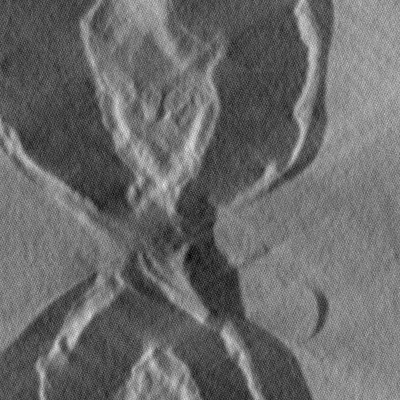}
   \end{tabular}
 \caption{Image reconstructions for \texttt{sg}, \texttt{olbfgs} and \texttt{slimLS}, along with true image and with subimages provided below.  The area of the subimage is denoted by the enclosed dotted region on the true image.}\label{fig:tomo2d_images}
 \end{center}
 \end{figure}

 Image reconstructions and subimages, along with the true image, are provided in Figure~\ref{fig:tomo2d_images}.  We observe that after one epoch of the data, the \texttt{slimLS} reconstruction contains sharper details and fewer artifacts than the \texttt{sg} and \texttt{olbfgs} reconstructions.  As described in Section~\cref{sec:computation}, it is difficult to provide a fair comparison of methods, especially in terms of the memory level and the step length.  Careful tuning of the step length for \texttt{sg} and \texttt{obfgs} can lead to reconstructions that are similar in quality to the \texttt{slimLS} reconstruction, but that is very time consuming especially for these massive problems.  Also, as observed in Section~\cref{sec:num1}, there may be only a small window of good values.
 If good parameters are known in advance, \texttt{olbfgs} can produce good solutions, but if they are not known a priori, then poor results, or even divergence of the relative reconstruction errors, were often observed.

 \begin{figure}[b]
   \begin{center}
 \begin{tabular}{ccc}
   \raisebox{-.5\height}{\includegraphics[width=.5\textwidth]{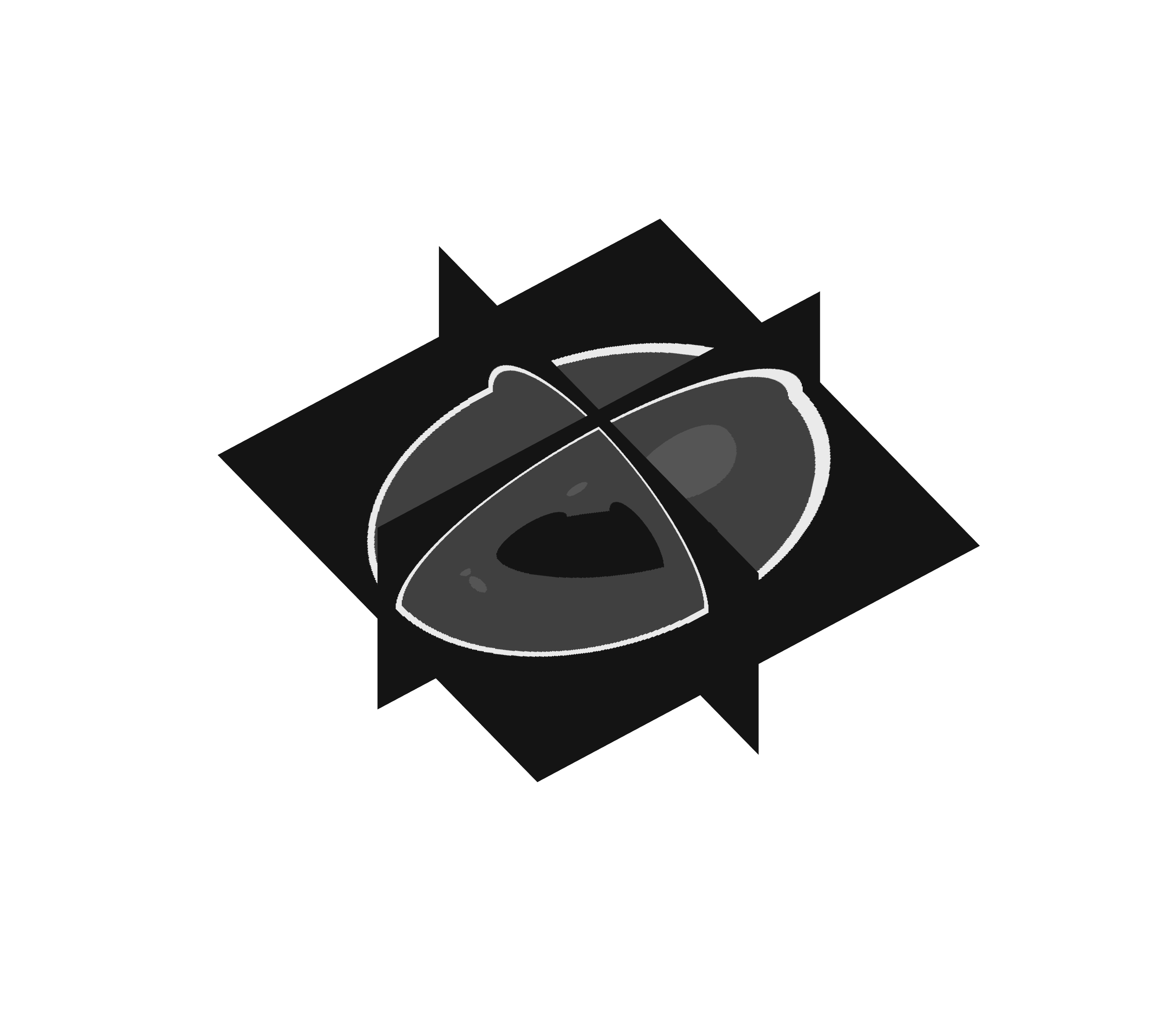}}&
   \includegraphics[width=.2\textwidth]{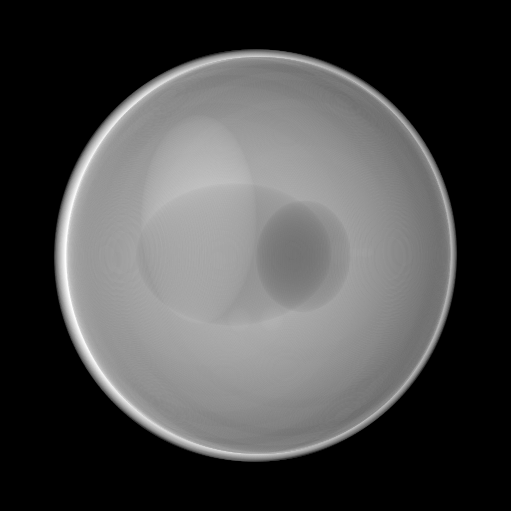}
   & \includegraphics[width=.2\textwidth]{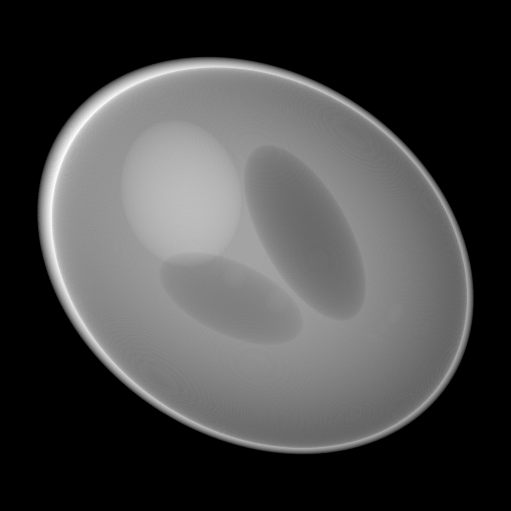}\\[-18ex]
   & \includegraphics[width=.2\textwidth]{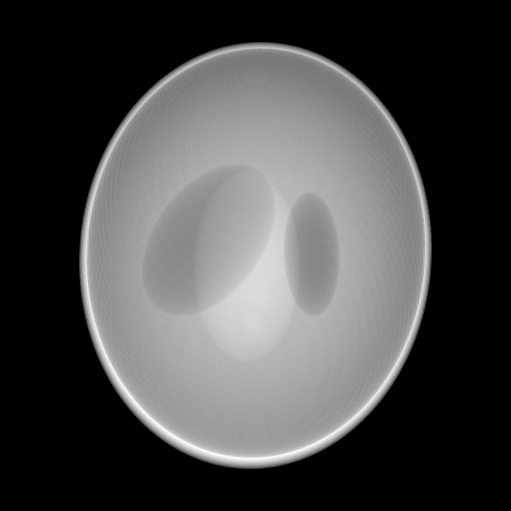}&
   \includegraphics[width=.2\textwidth]{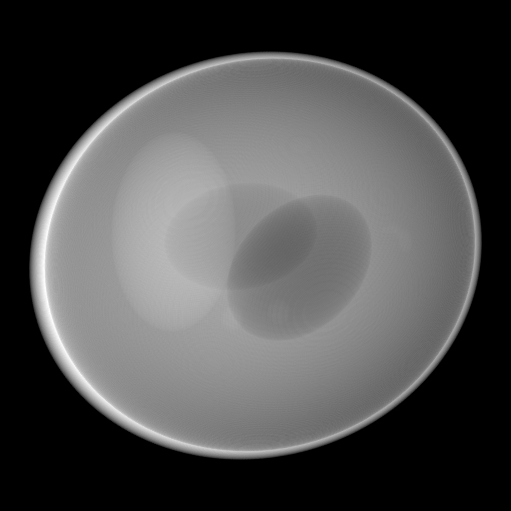}
 \end{tabular}
   \caption{For the 3D tomography example, we provide orthogonal slices of the true image phantom (left) along with four of the observed projection images (right).}\label{fig:tomo3d}
   \end{center}
 \end{figure}

 \paragraph{Three-Dimensional Streaming Tomography.}
 Next we demonstrate the power of \texttt{slimLS} for three-dimensional tomography reconstruction. For very large problems where data access is a computational bottleneck and for problems where data is being streamed and partial reconstructions are needed, row-action methods are the only feasible option.   Here we show that sampled limited memory methods can be a good alternative.

 In this problem setup, the true image is a $511 \times 511 \times 511$ modified 3D Shepp-Logan phantom.  We generate $1,\!000$ projection images of size $511 \times 511$ taken from random directions, which are samples from the uniform distribution over the unit sphere. The raytracing matrix $\bfA$ is of size $261,\!121,\!000 \times 133,\!432,\!831$ and is certainly never constructed, but matrix blocks are generated using a modification of the \texttt{tomobox} code \cite{tomobox} which represents parallel beam tomography. White noise was added to the projection images such that the noise level over all projection images is $0.001$.  Orthogonal slices of the true 3D phantom, along with four of the observed, projection images are provided in Figure~\ref{fig:tomo3d}.

 We run the \texttt{slimLS} method with $r=0$ and damping factor $\alpha_k = 1$.
 After one epoch of the data (i.e., the cost of accessing all data once), we
 compare the \texttt{slimLS} reconstruction to the stochastic gradient and online LBFGS reconstructions. For \texttt{sg}, we use step length $\alpha_k = 0.0001$, and for \texttt{olbfgs}, we use $\alpha_k = 1$ and memory level $10$.  Relative reconstruction error norms per iteration are provided in Figure~\ref{fig:tomo3dRelError}.  Notice that even in early iterations, where only a small fraction of the data has been accessed, \texttt{slimLS} reconstructions have smaller relative reconstruction errors than \texttt{sg} and \texttt{olbfgs}.

 We computed absolute error images, which correspond to absolute values of the difference between the reconstruction and the true image, and we provide three slices (in the $x$, $y$, and $z$ direction) for \texttt{sg}, \texttt{olbfgs}, and \texttt{slimLS} in Figure \ref{fig:errimages}.  The absolute errors are provided in inverted colormap so that black corresponds to large errors.  All images use the same colormap. We observe that absolute error images for \texttt{sg} are significantly worse than those from \texttt{olbfgs} and \texttt{slimLS} reconstructions, with absolute error images for \texttt{slimLS} having fewer artifacts.

 \begin{figure}[b]
  \begin{center}
   \includegraphics[width=1.0\textwidth]{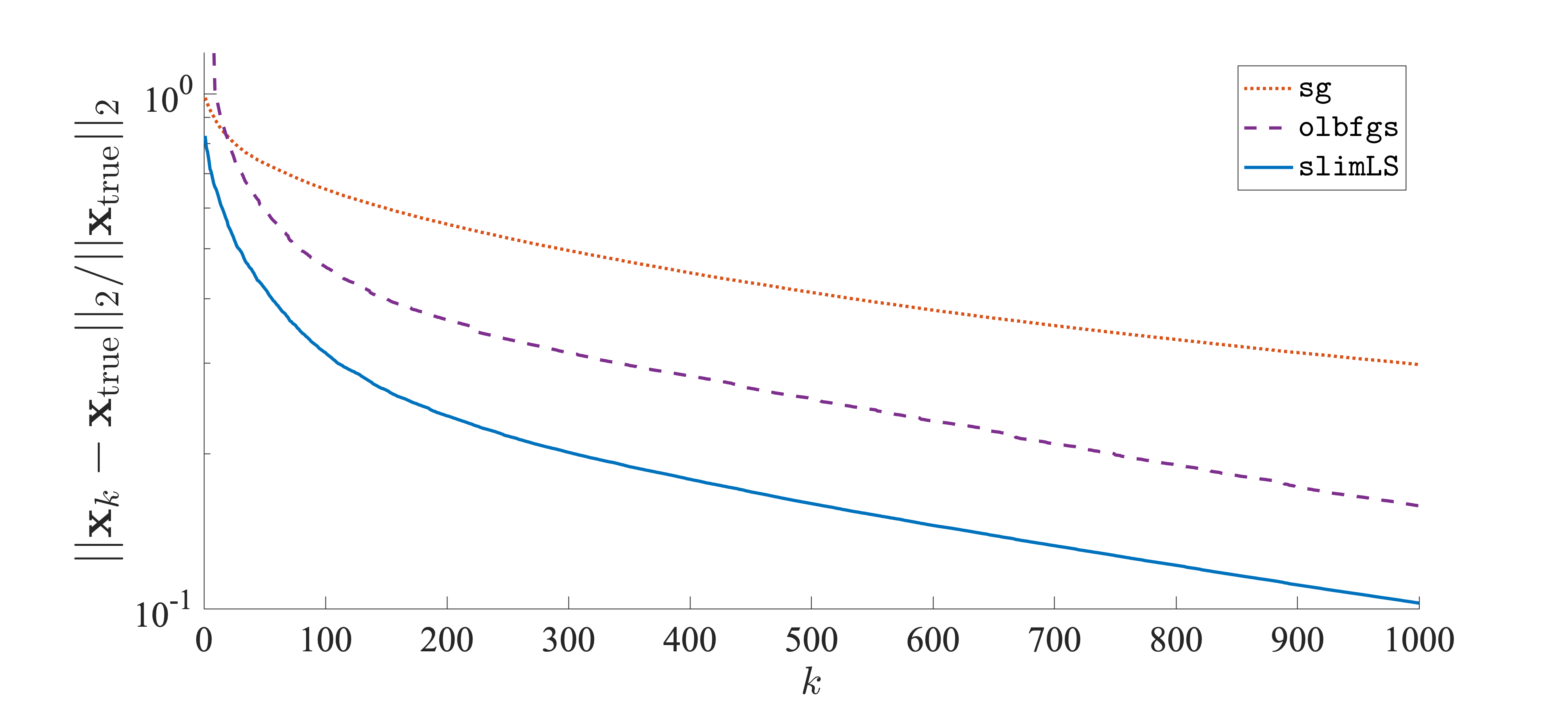}
   \end{center}
   \caption{Relative reconstruction errors per iteration of \texttt{sg}, \texttt{olbfgs}, and \texttt{slimLS} for the 3D tomography example.}\label{fig:tomo3dRelError}
 \end{figure}

 \begin{figure}[bthp]
  \begin{center}
 \begin{tabular}{cccc}
   & {\tt sg}   & {\tt olbfgs} & {\tt slimLS} \\
   \rotatebox{90}{\hspace*{7ex}$x$-slice}&
   \includegraphics[width=.3\textwidth]{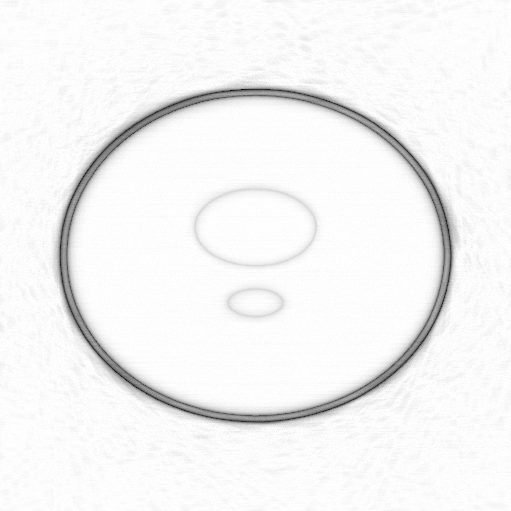}&
   \includegraphics[width=.3\textwidth]{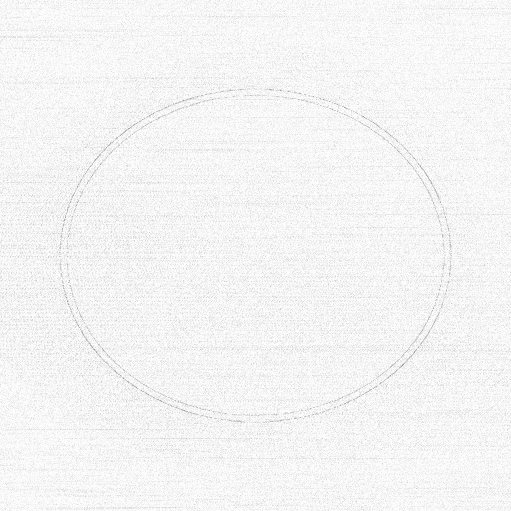}&
   \includegraphics[width=.3\textwidth]{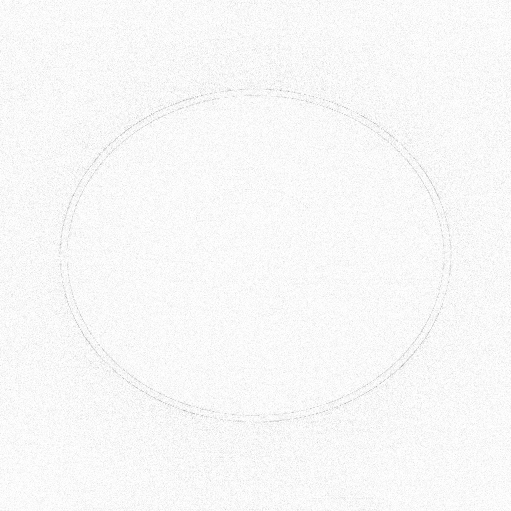}\\[-4ex]
   & \multicolumn{1}{r}{$0.3262$} & \multicolumn{1}{r}{$0.1548$} & \multicolumn{1}{r}{$0.1005$}\\ [2ex]
   \rotatebox{90}{\hspace*{7ex}$y$-slice} &
   \includegraphics[width=.3\textwidth]{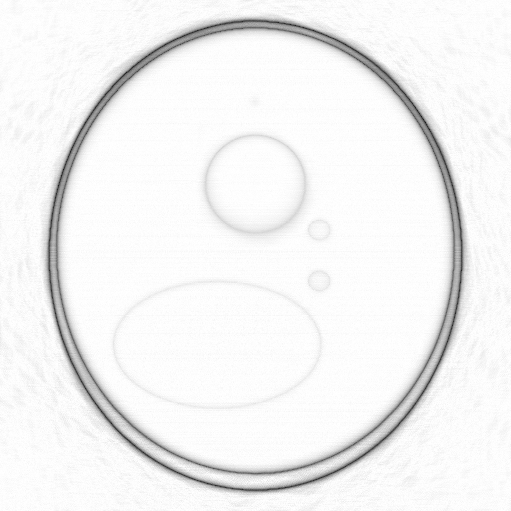}&
   \includegraphics[width=.3\textwidth]{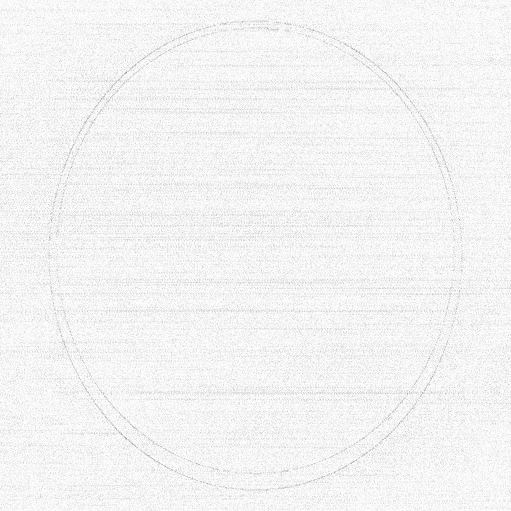}&
   \includegraphics[width=.3\textwidth]{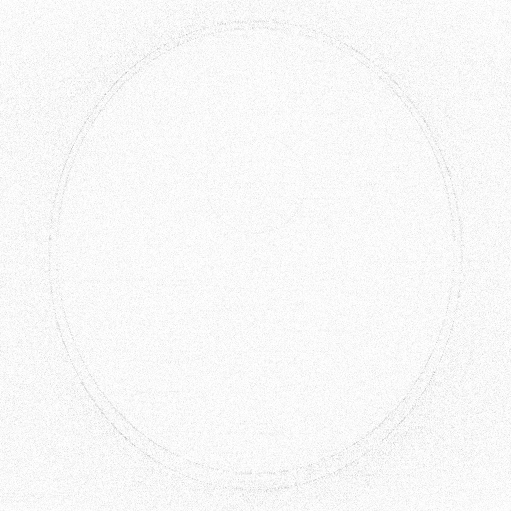}\\[-4ex]
   & \multicolumn{1}{r}{$0.2580$} & \multicolumn{1}{r}{$0.1248$} & \multicolumn{1}{r}{$0.0750$}\\ [2ex]
   \rotatebox{90}{\hspace*{7ex}$z$-slice} &
   \includegraphics[width=.3\textwidth]{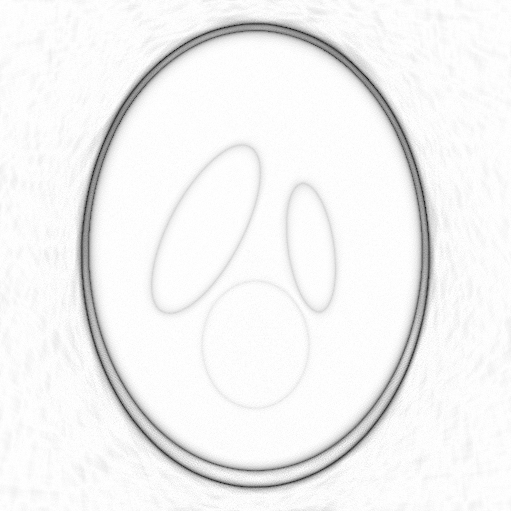}&
   \includegraphics[width=.3\textwidth]{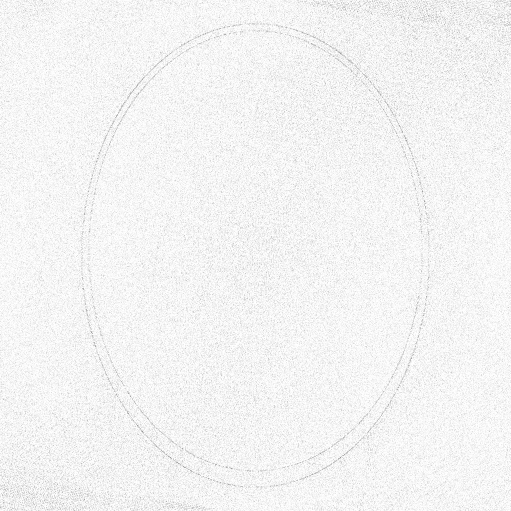}&
   \includegraphics[width=.3\textwidth]{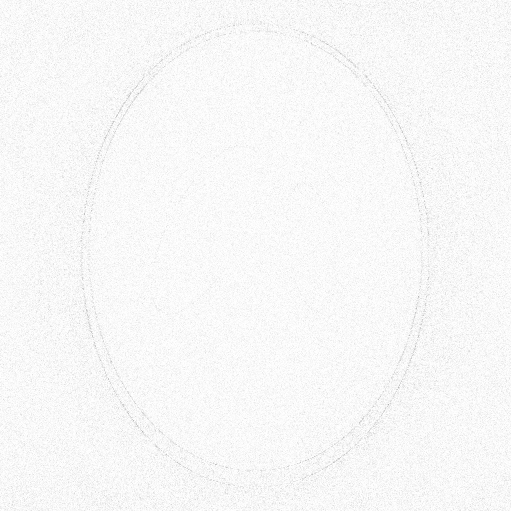}\\[-4ex]
   & \multicolumn{1}{r}{$0.2738$} & \multicolumn{1}{r}{$0.1386$} & \multicolumn{1}{r}{$0.0873$}  \\
 \end{tabular}
   \caption{For the 3D tomographic reconstruction problem, we provide $x$, $y$, and $z$ slices of absolute error images for \texttt{sg}, \texttt{olbfgs}, and \texttt{slimLS} reconstructions after one epoch of the data.  The relative reconstruction error norm for each slice is provided in the bottom right corner of each image.}\label{fig:errimages}
   \end{center}
 \end{figure}

 \section{Conclusions}\label{sec:con}
 In this paper, we investigate sampled limited memory methods for solving massive inverse problems, such as those that arise in modern 2D and 3D tomography applications.  Limited memory row-action methods are relevant in scenarios where \emph{full} matrix-vector-multiplications with coefficient matrix $\bfA$ are not possible or too computationally expensive.  This includes problems where $\bfA$ is so large that it does not fit in computer memory and problems where the data is being streamed.
 The main theoretical contribution is that, contrary to existing row-action and stochastic approximation methods, the \texttt{slimLS} method has both favorable initial and asymptotic convergence properties. Additional benefits of \texttt{slimLS} include faster initial convergence due to the use of information from previous iterates and convergence for a wider range of step length or damping parameters.  We provide theoretical convergence results, and numerical examples from massive tomography reconstruction problems show the potential impact of these methods.

\section*{Acknowledgements}
We gratefully acknowledge support by the National Science Foundation under grants NSF DMS 1723048 (L. Tenorio), NSF DMS 1723005 (M. Chung, J. Chung) and NSF DMS 1654175 (J. Chung).  J. Chung and M. Chung would also like to acknowledge the Alexander von Humboldt Foundation for their generous support.

\appendix

\section{Randomized recursive LS method} \label{sec:proofs2}
In this section, we show that the $k$-th iterate of the randomized recursive LS method defined in \eqref{eq:rrls} is the minimum norm solution of LS problem in \eqref{eq:rrls2}.	We prove this by induction. For $k=1$ and with $\bfx_0=\bfzero$,  \eqref{eq:rrls} yields  $\bfx_{1} = \bfA^{\dagger}_{1}\bfb_{1}$.  Now let us assume that
\begin{align} \bfx_{k-1} = \left(\sum_{i=1}^{k-1} \bfA\t_{i}\bfA_{i}\right)^{\dagger} \begin{bmatrix} \bfA_{1} \\ \vdots \\ \bfA_{k-1} \end{bmatrix}\t  \begin{bmatrix} \bfb_{1} \\ \vdots \\ \bfb_{k-1} \end{bmatrix}, \end{align}
then from \eqref{eq:rrls}, we get
\begin{align} \nonumber
   \bfx_{k} &  = \bfx_{k-1} - \left(\sum_{i=1}^{k}\bfA\t_{i}\bfA_{i}\right)^{\dagger}\bfA\t_{k}\left(\bfA_{k}\bfx_{k-1}-\bfb_{k}\right) \\
	 & = \left(\sum_{i=1}^{k}\bfA\t_{i}\bfA_{i}\right)^{\dagger}\left[\left(\sum_{i=1}^{k}\bfA\t_{i}\bfA_{i}\right)\bfx_{k-1} - \bfA\t_{k}\left(\bfA_{k}\bfx_{k-1}-\bfb_{k}\right)\right] \label{eq:range}
\end{align}
where we are using the fact that $ \bfx_{k-1} = \left(\sum_{i=1}^{k}\bfA\t_{i}\bfA_{i}\right)^{\dagger}\left(\sum_{i=1}^{k}\bfA\t_{i}\bfA_{i}\right)\bfx_{k-1}$ since
$\bfx_{k-1} \in  \calR \left(\sum_{i=1}^{k-1}\bfA\t_{i}
\bfA_{i}\right) \subseteq  \calR \left(\sum_{i=1}^{k}\bfA\t_{i}\bfA_{i}\right)$.
A similar argument can be used to show that $ \left(\sum_{i=1}^{k-1} \bfA\t_{i}\bfA_{i}\right)\left(\sum_{i=1}^{k-1} \bfA\t_{i}\bfA_{i}\right)^{\dagger} \begin{bmatrix} \bfA_{1} \\ \vdots \\ \bfA_{k-1} \end{bmatrix}\t  \begin{bmatrix} \bfb_{1} \\ \vdots \\ \bfb_{k-1} \end{bmatrix}  = \begin{bmatrix} \bfA_{1} \\ \vdots \\ \bfA_{k-1} \end{bmatrix}\t  \begin{bmatrix} \bfb_{1} \\ \vdots \\ \bfb_{k-1} \end{bmatrix} $ since $\begin{bmatrix} \bfA_{1} \\ \vdots \\ \bfA_{k-1} \end{bmatrix}\t  \begin{bmatrix} \bfb_{1} \\ \vdots \\ \bfb_{k-1} \end{bmatrix} \in \calR \left(\sum_{i=1}^{k-1} \bfA\t_{i}\bfA_{i}\right)$, and we arrive at
\begin{equation*}
   \bfx_{k} =   \left(\sum_{i=1}^{k}\bfA\t_{i}\bfA_{i}\right)^{\dagger}\left[\left(\sum_{i=1}^{k-1}\bfA\t_{i}\bfA_{i}\right)\bfx_{k-1} + \bfA\t_{k}\bfb_{k}\right]
     = \left(\sum_{i=1}^{k}\bfA\t_{i}\bfA_{i}\right)^{\dagger} \begin{bmatrix} \bfA_{1} \\ \vdots \\ \bfA_{k} \end{bmatrix}\t  \begin{bmatrix} \bfb_{1} \\ \vdots \\ \bfb_{k} \end{bmatrix}.
\end{equation*}

\section{Proofs for Section \ref{sec:conv}} \label{sec:proofs}

We continue to use the notation and assumptions from Sections \ref{sec:intro} and \ref{sec:conv}.

\begin{lemma} \label{lem:B}
 If $\bfA \in \mathbb{R}^{m \times n}$ has full column-rank, then for any fixed $\alpha>0$:
 \begin{enumerate}
\vspace{-.1cm}
\item $\norm[2]{\mathbb{E} \bfB_k/\alpha} < 1$ with upper bound
\[
\|\mathbb{E}\,\bfB_k/\alpha\|_2 \leq \frac{M^o}{M} + \frac{M-M^o}{M(1+\alpha\,A_{\min})} < 1,
\]
where $M^o$ is the number of eigenvalues $\lambda_{\min}(\bfA\t\bfW^{(i)}\left(\bfW^{(i)}\right)\t\bfA)$ equal to zero over the
$M$ different $\bfW^{(i)}$, \label{lem:partone}
\item $\bfB$ is symmetric positive definite, \label{lem:parttwo}
\item $\norm[2]{\bfB_{k}\bfA\t_{k}\bfA_{k}} \leq {\alpha A_{\max}}/{(1+\alpha A_{\max})}$, and \label{lem:partfour}
\item $0 < {\alpha A_{\min}}/{[M (1+\alpha A_{\min})]} \leq \lambda_{\min}(\bfB) \leq \lambda_{\max}(\bfB) < {(1+\alpha A_{\max})}/{2}$.     \label{lem:partthree}
\end{enumerate}
 \end{lemma}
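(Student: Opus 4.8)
The plan is to exploit that \emph{every} matrix appearing in the lemma is a fixed scalar function applied to the sampled Gram matrix $\bfG_k = \bfA_k\t\bfA_k$, which equals $\bfG\pow{i} = \bfA\t\bfW\pow{i}(\bfW\pow{i})\t\bfA$ with probability $1/M$. Thus for any function $f$ one has $\mathbb{E}\,f(\bfG_k) = \tfrac1M\sum_{i=1}^{M} f(\bfG\pow{i})$, and I can analyze spectra realization by realization. The single structural fact I will lean on is that, since $\mathbb{E}\,\bfW_k\bfW_k\t = \beta\bfI$, we have $\sum_{i=1}^{M}\bfG\pow{i} = M\beta\,\bfA\t\bfA$, which is positive definite because $\bfA$ has full column rank; consequently $\bigcap_i \ker{\bfG\pow{i}} = \ker{\sum_i \bfG\pow{i}} = \{\bfzero\}$. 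This is the ingredient that upgrades the soft estimates below into strict ones.

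For part (i), writing $\bfB_k/\alpha = (\bfI+\alpha\bfG_k)^{-1}$, each realization contributes a symmetric matrix with eigenvalues $1/(1+\alpha\mu)\in(0,1]$, where $\mu$ ranges over the eigenvalues of $\bfG\pow{i}$; its spectral norm is $1/(1+\alpha\lambda_{\min}(\bfG\pow{i}))$, which equals $1$ for the $M^{o}$ rank-deficient realizations and is at most $1/(1+\alpha A_{\min})$ for the remaining $M-M^{o}$. The triangle inequality then yields the displayed bound $M^{o}/M + (M-M^{o})/[M(1+\alpha A_{\min})]$. For the strict inequality I argue at the level of quadratic forms: $\bfv\t(\mathbb{E}\,\bfB_k/\alpha)\bfv = \tfrac1M\sum_i \bfv\t(\bfI+\alpha\bfG\pow{i})^{-1}\bfv \le 1$, with equality for a unit vector only if $\bfG\pow{i}\bfv=\bfzero$ for \emph{every} $i$, which is impossible by the trivial-intersection fact; maximizing this continuous quadratic form over the compact unit sphere gives $\norm[2]{\mathbb{E}\,\bfB_k/\alpha}<1$.

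Parts (ii)--(iv) follow the same template. For (ii), $\bfB=\bfI-\mathbb{E}\,\bfB_k/\alpha$ is symmetric as an average of symmetric matrices, and $\bfv\t\bfB\bfv = 1-\bfv\t(\mathbb{E}\,\bfB_k/\alpha)\bfv>0$ by the strict bound just obtained, so $\bfB\succ0$. For (iii), $\bfB_k\bfA_k\t\bfA_k = g(\bfG_k)$ with $g(\mu)=\alpha\mu/(1+\alpha\mu)$ increasing on $[0,\infty)$; since $\lambda_{\max}(\bfG_k)\le A_{\max}$ holds for every realization, the spectral norm (a maximum eigenvalue, as the matrix is PSD) is at most $g(A_{\max})=\alpha A_{\max}/(1+\alpha A_{\max})$, deterministically. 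For (iv), the upper bound comes from $\lambda_{\max}(\bfB)=\norm[2]{\mathbb{E}\,g(\bfG_k)}\le\mathbb{E}\,\norm[2]{g(\bfG_k)}\le g(A_{\max})$ followed by the elementary inequality $\alpha A_{\max}/(1+\alpha A_{\max})<(1+\alpha A_{\max})/2$ (equivalent to $0<1+(\alpha A_{\max})^{2}$); the lower bound comes from retaining a single full-rank realization $i_0$ in $\bfB=\tfrac1M\sum_i g(\bfG\pow{i})\succeq \tfrac1M\,g(\bfG\pow{i_0})\succeq \tfrac1M\,g(A_{\min})\bfI$, using $\lambda_{\min}(\bfG\pow{i_0})\ge A_{\min}$ and monotonicity of $g$.

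The main obstacle---and the only place real care is needed---is the passage from the soft bounds to the strict ones in (i), (ii), and the lower bound of (iv). The individual sampled Gram matrices are generically rank-deficient (each block reads only $\ell<n$ rows), so term-by-term estimates saturate at $1$ from above for $\bfB_k/\alpha$ and at $0$ from below for $\bfB$; nothing is gained by treating realizations in isolation. The decisive step is converting the global full-column-rank hypothesis on $\bfA$ into the trivial-intersection property $\bigcap_i \ker{\bfG\pow{i}}=\{\bfzero\}$ and, for the lower bound, into the existence of at least one realization $i_0$ with $\lambda_{\min}(\bfG\pow{i_0})>0$ (so that $M^{o}<M$ and $A_{\min}$ is well defined).
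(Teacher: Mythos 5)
Most of your argument tracks the paper's proof closely. Your quadratic-form equality-case analysis for the strict bound $\norm[2]{\mathbb{E}\,\bfB_k/\alpha}<1$ is the same idea the paper uses (Jensen's inequality plus an eigenvector argument forcing a common kernel vector, contradicting $\mathbb{E}\,\bfA\t_k\bfA_k=\beta\,\bfA\t\bfA$ being positive definite), just executed more cleanly --- the paper even misstates the equality condition as $\lambda_{\max}(\bfA\t_k\bfA_k)=0$ where it should be $\lambda_{\min}$, a slip your version avoids. Parts (ii) and (iii) are identical to the paper's, as is the upper bound in (iv), including the elementary inequality $\alpha A_{\max}/(1+\alpha A_{\max})<(1+\alpha A_{\max})/2$. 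Your lower bound in (iv) --- minorizing the average by a single full-rank realization $i_0$ --- is a mild variant of the paper's route (the paper instead reuses its part (i) bound, whose last step needs $M-M^o\geq 1$), and both land on the same constant $\alpha A_{\min}/[M(1+\alpha A_{\min})]$.

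The gap is your closing claim that full column rank of $\bfA$ yields ``the existence of at least one realization $i_0$ with $\lambda_{\min}(\bfG\pow{i_0})>0$, so that $M^o<M$.'' That implication is false. Writing $\bfG\pow{i}=\bfA\t\bfW\pow{i}(\bfW\pow{i})\t\bfA$, take $\bfA=\bfI_n$ with $n\geq 2$, sampled one row at a time ($\bfW\pow{i}=\bfe_i$, $M=n$, $\beta=1/n$): every $\bfG\pow{i}=\bfe_i\bfe_i\t$ has rank one, so $M^o=M$ and $A_{\min}$ is undefined (a minimum over an empty set), even though $\bfA$ has full column rank and the kernels of the $\bfG\pow{i}$ intersect trivially. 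In other words, trivial intersection --- which you derive correctly, and which suffices for the strict inequality in (i), for (ii), for (iii), and for the upper bound in (iv) --- is strictly weaker than the existence of a full-rank block, and it is the latter that your lower bound in (iv) and the ``$<1$'' on the displayed bound in (i) genuinely require. To be fair, the paper's own proof carries the same defect: it asserts that the strict norm inequality implies $M^o<M$, which the same example refutes; indeed, whenever each block reads fewer than $n$ rows (as in all of the paper's experiments) one has $M^o=M$. So those two claims of the lemma really need $M^o<M$ as an additional hypothesis; granted that hypothesis, your argument, like the paper's, is complete.
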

 \begin{proof}(i) For simplicity we use the notation $\widetilde{\bfA}_i = (\bfW^{(i)})^\top\bfA$. By Jensen's inequality
\[
\|\mathbb{E}\,\bfB_k/\alpha \|_2= \|\mathbb{E}\,(\bfI+\alpha\,\bfA_k^\top\bfA_k)^{-1}\|_2 \leq
\mathbb{E}\,\| (\bfI+\alpha\,\bfA_k^\top\bfA_k)^{-1}\|_2 \leq 1.
\]
The last inequality becomes an equality only if $\| (\bfI+\alpha\,\bfA_k^\top\bfA_k)^{-1}\|_2=1$ a.s., in which case
$\lambda_{\max}(\bfA_k^\top\bfA_k) =0$ across all realizations of $\bfW_k$. That is,
$\| (\bfI+\alpha\,\widetilde{\bfA}_i^\top\widetilde{\bfA}_i)^{-1}\|_2 =1$ for all $i$. Furthermore, an eigenvector
$\bfv$ for the largest eigenvalue $\lambda=1$ of $\mathbb{E}\,\| (\bfI+\alpha\,\bfA_k^\top\bfA_k)^{-1}\|_2$
is also an eigenvector of all $(\bfI+\alpha\,\widetilde{\bfA}_i^\top\widetilde{\bfA}_i)^{-1}$ with eigenvalue 1.
That is, $\widetilde{\bfA}_i^\top\widetilde{\bfA}_i\,\bfv = \mathbf{0}$ for all $i$. This is not possible
because $\mathbb{E}\,\bfA_k^\top\bfA_k = \beta\,\bfA^\top\bfA$. Hence, $\|\bfB_k/\alpha\|_2<1$. This implies that $M^o <M$ and the upper bound in (i) follows again from
$\|\mathbb{E}\,\bfB_k/\alpha\|_2 \leq \mathbb{E}\,\|(\bfI + \alpha\,\bfA_k^\top\bfA_k)^{-1}\|_2$.\\
(ii) It is clear that $\bfB$ is symmetric positive semi-definite because $\bfB=\bfI -\mathbb{E}\,\bfB_k/\alpha$. It is positive definite by (i).\\
(iii) The upper bound follows from the identity $\bfB_k\bfA_k^\top\bfA_k = \bfI - (\bfI + \alpha\,\bfA_k^\top\bfA_k)^{-1}$.\\
(iv) From (iii),
\[
\lambda_{\max}(\bfB)\leq  \frac{\alpha A_{\max}}{1+\alpha A_{\max}} < \frac{1+\alpha\,A_{\max}}{2}.
\]
The lower bound follows from (i):
\begin{eqnarray*}
\lambda_{\min}(\bfB) \geq 1 - \mathbb{E}\,\|\bfB_k/\alpha\|_2 &\geq &
1 -  \frac{M^o}{M} - \frac{M-M^o}{M(1+\alpha\,A_{\min})}\\
& = & \left(\frac{M-M^o}{M}\right)\frac{\alpha\,A_{\min}}{1 + \alpha\,A_{\min}}
\geq  \frac{\alpha\,A_{\min}}{M(1 + \alpha\,A_{\min})}.
\end{eqnarray*}
\end{proof}

\begin{proof}[\textbf{Proof of Theorem \ref{thm:firstmoment}}]
We use $(\mathcal{F}_k)$ to denote the natural filtration induced by the sequence $\{\bfW_k\}$:
$\mathcal{F}_k = \sigma(\bfW_j,\,j\leq k)$.\\
(i) Using the recursion for $\bfx_{k}$, we obtain
\begin{align*}
\mathbb{E}\left[\bfx_{k}  - \widehat{\bfx} \rvert \, \mathcal{F}_{k-1} \, \right] & =  \bfx_{k-1} - \widehat{\bfx} - \mathbb{E}\left[\bfB_{k}\bfA\t_{k}\left(\bfA_{k}\bfx_{k-1}-\bfb_{k}\right) \rvert \, \mathcal{F}_{k-1} \right]  \\
& =  \bfx_{k-1} - \widehat{\bfx} -\bfB\left(\bfx_{k-1}-\bfB^{-1}\mathbb{E}\bfB_{k}\bfA\t_{k}\bfb_{k}\right)  \\
& = \left(\bfI- \bfB\right)\left(\bfx_{k-1}-\widehat{\bfx}\right) = (\mathbb{E}\,\bfB_k/\alpha )\,\left(\bfx_{k-1}-\widehat{\bfx}\right) ,
\end{align*}
and therefore
\[
\mathbb{E} \,\bfx_{k}  - \widehat{\bfx}  = (\mathbb{E}\,\bfB_k/\alpha ) (\mathbb{E}\,\bfx_{k-1}-\widehat{\bfx})
= (\mathbb{E}\, \bfB_k/\alpha )^{k} (\bfx_{0}-\widehat{\bfx}),
 \]
where the last equality comes from the fact that $\bfA_{k}$ are i.i.d. Using Lemma \ref{lem:B}(i)   we get
\[
\norm[2]{\mathbb{E}\bfx_{k} - \widehat{\bfx}}  \leq \norm[2]{\mathbb{E}\,\bfB_k/\alpha}^{k}\norm[2]{\bfx_{0}-\widehat{\bfx}}\to 0.
\]
Thus, $\mathbb{E}\,\bfx_{k} \rightarrow \widehat{\bfx}$ linearly.\\
(ii) Next we show that $\bfx_{k}$ converges linearly to a convergence horizon of $\widehat{\bfx}$. The recursion of $\bfx_k$
leads to
\[
\norm[2]{\bfx_{k} - \widehat{\bfx}}^{2}  =  \norm[2]{\bfx_{k-1}-\widehat{\bfx}}^{2} - 2 \left(\bfx_{k-1} - \widehat{\bfx}\right)\t  \bfB_{
k}\bfA\t_{k}(\bfA_{k}\bfx_{k-1}-\bfb_{k})
 + \norm[2]{\bfB_{k}\bfA\t_{k}(\bfA_{k}\bfx_{k-1}-\bfb_{k})}^2.
 \]
 We find an upper bound for the last term using Lemma \ref{lem:B}(i):
\begin{align*}
\norm[2]{\bfB_{k}\bfA\t_{k}(\bfA_{k}\bfx_{k-1}-\bfb_{k})}^2
& = \norm[2]{\bfB_{k}\bfA\t_{k}\bfA_{k}(\bfx_{k-1}-\widehat{\bfx}) + \bfB_{k}\bfA\t_{k}(\bfA_{k}\widehat{\bfx}-\bfb_{k})}^{2}\\
 & \leq  2\norm[2]{\bfB_{k}\bfA\t_{k}\bfA_{k}(\bfx_{k-1}-\widehat{\bfx})}^{2}+ 2\norm[2]{\bfB_{k}\bfA\t_{k}(\bfA_{k}\widehat{\bfx}-\bfb_{k})}^{2}  \\
 & \leq  2\norm[2]{\bfB_{k}\bfA\t_{k}\bfA_{k}(\bfx_{k-1}-\widehat{\bfx})}^{2}+ 2\alpha^{2}\norm[2]{\bfA\t_{k}(\bfA_{k}\widehat{\bfx}-\bfb_{k})}^{2},    \end{align*}
 and by Lemma \ref{lem:B}(iii)
 \[
 \norm[2]{\bfB_{k}\bfA\t_{k}\bfA_{k}(\bfx_{k-1}-\widehat{\bfx})}^{2}\leq \frac{\alpha\,A_{\max}}{1+\alpha\,A_{\max}}
 (\bfx_{k-1}-\widehat{\bfx})^\top\bfB_k\bfA_k^\top\bfA_k (\bfx_{k-1}-\widehat{\bfx}).
 \]
 The last two bounds yield
\begin{align*} \norm[2]{\bfx_{k} - \widehat{\bfx}}^{2} &  \leq \norm[2]{\bfx_{k-1}-\widehat{\bfx}}^{2} - 2 \left(\bfx_{k-1} - \widehat{\bfx}\right)\t \bfB_{k}\bfA\t_{k}(\bfA_{k}\bfx_{k-1}-\bfb_{k}) \\
 & + \frac{2\alpha A_{\text{max}}}{1+\alpha A_{\text{max}}} \left( \bfx_{k-1} - \widehat{\bfx}\right)\t \bfB_{k}\bfA\t_{k}\bfA_{k}\left(\bfx_{k-1}-\widehat{\bfx}\right) + 2\alpha^2\norm[2]{\bfA\t_{k}(\bfA_{k}\widehat{\bfx}-\bfb_{k})}^{2},
\end{align*}
whose conditional expectation and Lemma \ref{lem:B}(iv) give us
\begin{align*}
\mathbb{E}\left[ \norm[2]{\bfx_{k} - \widehat{\bfx}}^{2} \, \rvert \, \mathcal{F}_{k-1} \, \right]
  \leq & \norm[2]{\bfx_{k-1}-\widehat{\bfx}}^{2} - 2 \left( \bfx_{k-1} - \widehat{\bfx}\right)\t \bfB \left(\bfx_{k-1}-\widehat{\bfx}\right) \\
 & + \frac{2\alpha A_{\text{max}}}{1+\alpha A_{\text{max}}} \left( \bfx_{k-1} -
 \widehat{\bfx}\right)\t \bfB \left(\bfx_{k-1}-\widehat{\bfx}\right)
+  2\alpha^2\mathbb{E}\norm[2]{\bfA\t_{k}(\bfA_{k}\widehat{\bfx}-\bfb_{k})}^{2} \\
	= & \norm[2]{\bfx_{k-1}-\widehat{\bfx}}^{2} - \frac{2}{1+\alpha A_{\text{max}}}\left(\bfx_{k-1} - \widehat{\bfx}\right)\t \bfB \left(\bfx_{k-1}-\widehat{\bfx}\right)\\& +2\alpha^2\,\mathbb{E}\norm[2]{\bfA\t_{k}(\bfA_{k}\widehat{\bfx}-\bfb_{k})}^{2}\\
\leq &
\left(1 - 2c \right)\norm[2]{\bfx_{k-1}-\widehat{\bfx}}^{2}  +2\alpha^{2} \mathbb{E}
\norm[2]{\bfA\t_{k}(\bfA_{k}\widehat{\bfx}-\bfb_{k})}^{2},
\end{align*}
where $c = {\lambda_{\text{min}}\left(\bfB\right)}/{(1+\alpha A_{\text{max}})}.$
Then, the expected squared norm of the error can be bounded using the fact that $0 < 1-2c <1$ by Lemma \ref{lem:B}(iv)
\begin{align*}
\mathbb{E}\norm[2]{\bfx_{k}-\widehat{\bfx}}^{2}& \leq  \left(1 - 2c \right)^{k}\mathbb{E}\norm[2]{\bfx_{0}-\widehat{\bfx}}^{2}
 + 2\alpha^{2}\mathbb{E}\norm[2]{\bfA\t_{k}(\bfA_{k}\widehat{\bfx}-\bfb_{k})}^{2} \, \sum_{i=0}^{k-1}  \left(1 - 2c \right)^{i}
\\ & \leq  \left(1 - 2c\right)^{k}\norm[2]{\bfx_{0}-\widehat{\bfx}}^{2}  + \alpha^{2}c^{-1}\mathbb{E}\norm[2]{\bfA\t_{k}(\bfA_{k}\widehat{\bfx}-\bfb_{k})}^{2}.
\end{align*}
\end{proof}

\begin{proof}[\textbf{Proof of Lemma \ref{thm:bias}}]
By \eqref{eq:xh-xls}, $\widehat{\bfx} - \bfx_{\text{LS}} = \bfB^{-1}\bfC \bfQ_{\bfA}\bfb$, where
\[
\bfC = \mathbb{E}\,\bfB_k\bfA_k^\top\bfW_k^\top.
\]
Since $\bfQ_{\bfA}$ projects onto the orthogonal complement of the column space of $\bfA$, we have
\[
\bfB^{-1}\bfC \bfQ_{\bfA} = (\bfB^{-1}\bfC - (\bfA^\top\bfA)^{-1}\bfA^\top)\bfQ_{\bfA},
\]
and also using Lemma \ref{lem:B}(iii)
\begin{align*}
\|\bfB^{-1}\bfC - (\bfA^\top\bfA)^{-1}\bfA^\top\|_2 &=\|\bfB^{-1}\mathbb{E}\,(\bfB_k-\alpha\bfI)\bfA_k^\top\bfW_k^\top
+ \alpha\bfB^{-1}\mathbb{E}\,\bfA_k\t\bfW_k^\top - (\bfA^\top\bfA)^{-1}\bfA^\top\|_2\\
&\leq \|\bfB^{-1}\|_2\|\mathbb{E}\,\alpha\bfB_k\bfA_k^\top\bfA_k\bfA_k^\top\bfW_k^\top\|_2
+ \|\alpha\beta \bfB^{-1}\bfA^\top - (\bfA^\top\bfA)^{-1}\bfA^\top\|_2\\
&\leq \frac{\alpha^2 A_{\max}\,\|\bfB^{-1}\|_2}{1+\alpha A_{\max}}\,\mathbb{E}\,\|\bfA_k^\top\bfW_k^\top\|_2
+ \|\alpha\beta \bfB^{-1} - (\bfA^\top\bfA)^{-1}\|_2\,\|\bfA\|_2.
\end{align*}
Furthermore,
\begin{equation*}
 \norm[2]{\alpha \beta \bfB^{-1}-\left(\bfA\t\bfA\right)^{-1}} \leq \norm[2]{\bfB^{-1}}\norm[2]{\alpha \beta\bfA\t\bfA- \bfB}\norm[2]{\left(\bfA\t\bfA\right)^{-1}}
\end{equation*}
and
\begin{equation*}
	\norm[2]{\alpha \beta\bfA\t\bfA- \bfB} = \norm[2]{\bbE (\alpha \bfI - \bfB_k)\bfA_k\t \bfA_k} \leq  \frac{\alpha^2 A_{\max}}{1+\alpha A_{\max}}\bbE \norm[2]{\bfA\t_k\bfA_k}.
\end{equation*}
Using the upper bound for $\|\bfB^{-1}\|$ from Lemma \ref{lem:B}(iv) we can now write
\[
\|\bfB^{-1}\bfC - (\bfA^\top\bfA)^{-1}\bfA^\top\|_2 \leq  \frac{\alpha^2 A_{\max}}{1+\alpha A_{\max}}\,\|\bfB^{-1}\|_2\,C
\leq \frac{\alpha A_{\max}}{1+\alpha A_{\max}}\frac{M(1+\alpha A_{\min})}{A_{\min}} \,C
\]
where  $C= \mathbb{E}\,\|\bfA_k^\top\bfW_k\|_2 + \|(\bfA^\top\bfA)^{-1}\|_2\,\|\bfA\|_2\,\mathbb{E}\,\|\bfA_k^\top\bfA_k\|_2$. The upper bound finally follows from
\[
\|\widehat{\bfx} - \bfx_{\text{LS}}\|_2 = \|\bfB^{-1}\bfC \bfQ_{\bfA}\bfb\|_2 \leq
\|\bfB^{-1}\bfC - (\bfA^\top\bfA)^{-1}\bfA^\top\|_2\,\|\bfQ_{\bfA}\bfb\|_2.
\]

\vspace*{-4.5ex}
\[\left.\right.\]
\end{proof}

\section*{References}
\bibliographystyle{plain}
\bibliography{references}

\begin{thebibliography}{10}

\bibitem{andersen2014generalized}
MS~Andersen and PC~Hansen.
\newblock Generalized row-action methods for tomographic imaging.
\newblock {\em Numerical Algorithms}, 67(1):121--144, 2014.

\bibitem{avron2010blendenpik}
H~Avron, P~Maymounkov, and S~Toledo.
\newblock Blendenpik: Supercharging {LAPACK}'s least-squares solver.
\newblock {\em SIAM Journal on Scientific Computing}, 32(3):1217--1236, 2010.

\bibitem{benveniste2012}
A~Benveniste, SS~Wilson, M~Metivier, and P~Priouret.
\newblock {\em Adaptive Algorithms and Stochastic Approximations}.
\newblock Stochastic Modelling and Applied Probability. Springer, New York,
  2012.

\bibitem{bjorck1996numerical}
A~Bj\"{o}rck.
\newblock {\em Numerical Methods for Least Squares Problems}.
\newblock SIAM, 1996.

\bibitem{Bottou1998}
L~Bottou.
\newblock {\em Online Learning in Neural Networks}, chapter 2. Online learning
  and stochastic approximations, pages 9--42.
\newblock Cambridge University Press, 1998.

\bibitem{bottou2004large}
L~Bottou and YL~Cun.
\newblock Large scale online learning.
\newblock In {\em Advances in Neural Information Processing Systems}, pages
  217--224, 2004.

\bibitem{bottou2005line}
L~Bottou and YL~Cun.
\newblock On-line learning for very large data sets.
\newblock {\em Applied Stochastic Models in Business and Industry},
  21(2):137--151, 2005.

\bibitem{bottou2018optimization}
L~Bottou, FE~Curtis, and J~Nocedal.
\newblock Optimization methods for large-scale machine learning.
\newblock {\em SIAM Review}, 60(2):223--311, 2018.

\bibitem{boumal2018heterogeneous}
N~Boumal, T~Bendory, RR~Lederman, and A~Singer.
\newblock Heterogeneous multireference alignment: A single pass approach.
\newblock In {\em 2018 52nd Annual Conference on Information Sciences and
  Systems (CISS)}, pages 1--6. IEEE, 2018.

\bibitem{Byrd2016}
RH~Byrd, SL~Hansen, J~Nocedal, and Y~Singer.
\newblock A stochastic quasi-{N}ewton method for large-scale optimization.
\newblock {\em SIAM Journal on Optimization}, 26(2):1008--1031, 2016.

\bibitem{censor1983strong}
Y~Censor, PB~Eggermont, and D~Gordon.
\newblock Strong underrelaxation in {K}aczmarz's method for inconsistent
  systems.
\newblock {\em Numerische Mathematik}, 41(1):83--92, 1983.

\bibitem{censor2009note}
Y~Censor, GT~Herman, and M~Jiang.
\newblock A note on the behavior of the randomized {K}aczmarz algorithm of
  {S}trohmer and {V}ershynin.
\newblock {\em Journal of Fourier Analysis and Applications}, 15(4):431--436,
  2009.

\bibitem{chen2006regression}
Y~Chen, G~Dong, J~Han, J~Pei, BW~Wah, and J~Wang.
\newblock Regression cubes with lossless compression and aggregation.
\newblock {\em IEEE Transactions on Knowledge and Data Engineering},
  18(12):1585--1599, 2006.

\bibitem{chung2017stochastic}
J~Chung, M~Chung, JT~Slagel, and L~Tenorio.
\newblock Stochastic {N}ewton and quasi-{N}ewton methods for large linear
  least-squares problems.
\newblock {\em arXiv preprint arXiv:1702.07367}, 2017.

\bibitem{chung2006numerical}
J~Chung, E~Haber, and J~Nagy.
\newblock Numerical methods for coupled super-resolution.
\newblock {\em Inverse Problems}, 22(4):1261, 2006.

\bibitem{egidi2006sherman}
N~Egidi and P~Maponi.
\newblock A {S}herman--{M}orrison approach to the solution of linear systems.
\newblock {\em Journal of Computational and Applied Mathematics},
  189(1):703--718, 2006.

\bibitem{eldar2011acceleration}
YC~Eldar and D~Needell.
\newblock Acceleration of randomized {K}aczmarz method via the
  {J}ohnson--{L}indenstrauss lemma.
\newblock {\em Numerical Algorithms}, 58(2):163--177, 2011.

\bibitem{elfving1980block}
T~Elfving.
\newblock Block-iterative methods for consistent and inconsistent linear
  equations.
\newblock {\em Numerische Mathematik}, 35(1):1--12, 1980.

\bibitem{elfving2014semi}
T~Elfving, PC~Hansen, and T~Nikazad.
\newblock Semi-convergence properties of {K}aczmarz's method.
\newblock {\em Inverse Problems}, 30(5):055007, 2014.

\bibitem{feichtinger1992new}
HG~Feichtinger, C~Cenker, M~Mayer, H~Steier, and T~Strohmer.
\newblock New variants of the {POCS} method using affine subspaces of finite
  codimension with applications to irregular sampling.
\newblock In {\em Visual Communications and Image Processing'92}, volume 1818,
  pages 299--311. International Society for Optics and Photonics, 1992.

\bibitem{golub2012matrix}
GH~Golub and CF~Van~Loan.
\newblock {\em Matrix Computations}, volume~3.
\newblock JHU press, 2012.

\bibitem{gordon1970algebraic}
R~Gordon, R~Bender, and GT~Herman.
\newblock Algebraic reconstruction techniques ({ART}) for three-dimensional
  electron microscopy and {X}-ray photography.
\newblock {\em Journal of Theoretical Biology}, 29(3):471--481, 1970.

\bibitem{Gower2016s}
RM~Gower, D~Goldfarb, and P~Richt{\'a}rik.
\newblock Stochastic block {BFGS}: {S}queezing more curvature out of data.
\newblock In {\em International Conference on Machine Learning}, pages
  1869--1878, 2016.

\bibitem{Gower2015}
RM~Gower and P~Richt{\'a}rik.
\newblock Randomized iterative methods for linear systems.
\newblock {\em SIAM Journal on Matrix Analysis and Applications},
  36(4):1660--1690, 2015.

\bibitem{gower2017randomized}
RM~Gower and P~Richt{\'a}rik.
\newblock Randomized quasi-{N}ewton updates are linearly convergent matrix
  inversion algorithms.
\newblock {\em SIAM Journal on Matrix Analysis and Applications},
  38(4):1380--1409, 2017.

\bibitem{halko2011finding}
N~Halko, PG~Martinsson, and JA~Tropp.
\newblock Finding structure with randomness: Probabilistic algorithms for
  constructing approximate matrix decompositions.
\newblock {\em SIAM Review}, 53(2):217--288, 2011.

\bibitem{hamalainen2015tomographic}
K~H{\"a}m{\"a}l{\"a}inen, L~Harhanen, A~Kallonen, A~Kujanp{\"a}{\"a}, E~Niemi,
  and S~Siltanen.
\newblock Tomographic x-ray data of a walnut.
\newblock {\em arXiv preprint arXiv:1502.04064}, 2015.

\bibitem{hand2000data}
DJ~Hand, G~Blunt, MG~Kelly, NM~Adams, et~al.
\newblock Data mining for fun and profit.
\newblock {\em Statistical Science}, 15(2):111--131, 2000.

\bibitem{hanke1990acceleration}
M~Hanke and W~Niethammer.
\newblock On the acceleration of {K}aczmarz's method for inconsistent linear
  systems.
\newblock {\em Linear Algebra and its Applications}, 130:83--98, 1990.

\bibitem{hansen2010discrete}
PC~Hansen.
\newblock {\em Discrete Inverse Problems: Insight and Algorithms}.
\newblock SIAM, 2010.

\bibitem{herman2009fundamentals}
GT~Herman.
\newblock {\em Fundamentals of Computerized Tomography: Image Reconstruction
  from Projections}.
\newblock Springer, 2009.

\bibitem{herman1993algebraic}
GT~Herman and LB~Meyer.
\newblock Algebraic reconstruction techniques can be made computationally
  efficient (positron emission tomography application).
\newblock {\em IEEE Transactions on Medical Imaging}, 12(3):600--609, 1993.

\bibitem{tomobox}
J~Jorgensen.
\newblock Tomobox.
\newblock
  \url{https://www.mathworks.com/matlabcentral/fileexchange/28496-tomobox?s_tid=prof_contriblnk}.
\newblock Accessed: September 2019.

\bibitem{kaczmarz1937angeniherte}
S~Kaczmarz.
\newblock Angen\"{a}herte {A}ufl\"{o}sung von {S}ystemen linearer
  {G}leichungen.
\newblock {\em Bulletin International de l'Acad\'{e}mie Polonaise des Sciences
  et des Lettres. Classe des Sciences Math\'{e}matiques et Naturelles.
  S\'{e}rie A, Sciences Math\'{e}matiques}, 35:335--357, 1937.

\bibitem{kushner2003stochastic}
H~Kushner and GG~Yin.
\newblock {\em Stochastic approximation and recursive algorithms and
  applications}, volume~35.
\newblock Springer Science \& Business Media, 2003.

\bibitem{levin20183d}
E~Levin, T~Bendory, N~Boumal, J~Kileel, and A~Singer.
\newblock {3D} ab initio modeling in cryo-{EM} by autocorrelation analysis.
\newblock In {\em 2018 IEEE 15th International Symposium on Biomedical Imaging
  (ISBI 2018)}, pages 1569--1573. IEEE, 2018.

\bibitem{meng2014lsrn}
X~Meng, MA~Saunders, and MW~Mahoney.
\newblock {LSRN}: A parallel iterative solver for strongly over- or
  underdetermined systems.
\newblock {\em SIAM Journal on Scientific Computing}, 36(2):C95--C118, 2014.

\bibitem{mokhtari2015global}
A~Mokhtari and A~Ribeiro.
\newblock Global convergence of online limited memory {BFGS}.
\newblock {\em The Journal of Machine Learning Research}, 16(1):3151--3181,
  2015.

\bibitem{natterer2001mathematics}
F~Natterer.
\newblock {\em The Mathematics of Computerized Tomography}.
\newblock SIAM, 2001.

\bibitem{needell2010randomized}
D~Needell.
\newblock Randomized {K}aczmarz solver for noisy linear systems.
\newblock {\em BIT Numerical Mathematics}, 50(2):395--403, 2010.

\bibitem{needell2016stochastic}
D~Needell, N~Srebro, and R~Ward.
\newblock Stochastic gradient descent, weighted sampling, and the randomized
  {K}aczmarz algorithm.
\newblock {\em Mathematical Programming}, 155(1-2):549--573, 2016.

\bibitem{Needell2014}
D~Needell and JA~Tropp.
\newblock Paved with good intentions: {A}nalysis of a randomized block
  {K}aczmarz method.
\newblock {\em Linear Algebra and its Applications}, 441:199--221, 2014.

\bibitem{needell2015randomized}
D~Needell, R~Zhao, and A~Zouzias.
\newblock Randomized block {K}aczmarz method with projection for solving least
  squares.
\newblock {\em Linear Algebra and its Applications}, 484:322--343, 2015.

\bibitem{nemirovski2009robust}
A~Nemirovski, A~Juditsky, G~Lan, and A~Shapiro.
\newblock Robust stochastic approximation approach to stochastic programming.
\newblock {\em SIAM Journal on Optimization}, 19(4):1574--1609, 2009.

\bibitem{PaSa82b}
CC~Paige and MA~Saunders.
\newblock {{\def\LSQRb{}}Algorithm 583}, {LSQR:} {S}parse linear equations and
  least-squares problems.
\newblock {\em ACM Transactions on Mathematical Software}, 8(2):195--209, 1982.

\bibitem{paige1982lsqr}
CC~Paige and MA~Saunders.
\newblock {LSQR}: An algorithm for sparse linear equations and sparse least
  squares.
\newblock {\em ACM Transactions on Mathematical Software}, 8(1):43--71, 1982.

\bibitem{parkinson2018achieving}
DY~Parkinson, JI~Pacold, M~Gross, TD~McDougall, C~Jones, J~Bows, I~Hamilton,
  DE~Smiles, S~De~Santis, A~Ratti, et~al.
\newblock Achieving fast high-resolution {3D} imaging by combining synchrotron
  x-ray micro{CT}, advanced algorithms, and high performance data management.
\newblock In {\em Image Sensing Technologies: Materials, Devices, Systems, and
  Applications V}, volume 10656, page 106560S. International Society for Optics
  and Photonics, 2018.

\bibitem{parkinson2017machine}
DY~Parkinson, DM~Pelt, T~Perciano, D~Ushizima, H~Krishnan, HS~Barnard,
  AA~MacDowell, and J~Sethian.
\newblock Machine learning for micro-tomography.
\newblock In {\em Developments in {X}-Ray Tomography XI}, volume 10391, page
  103910J. International Society for Optics and Photonics, 2017.

\bibitem{Rajaraman:2011:MMD:2124405}
A~Rajaraman and JD~Ullman.
\newblock {\em Mining of Massive Datasets}.
\newblock Cambridge University Press, New York, NY, USA, 2011.

\bibitem{schaul2013no}
T~Schaul, S~Zhang, and YL~Cun.
\newblock No more pesky learning rates.
\newblock In {\em International Conference on Machine Learning}, pages
  343--351, 2013.

\bibitem{shapiro2009lectures}
A~Shapiro, D~Dentcheva, and A~Ruszczy{\'n}ski.
\newblock {\em Lectures on Stochastic Programming: Modeling and Theory}.
\newblock SIAM, Philadelphia, 2009.

\bibitem{slagel2019sampled}
JT~Slagel, J~Chung, M~Chung, D~Kozak, and L~Tenorio.
\newblock Sampled {T}ikhonov regularization for large linear inverse problems.
\newblock {\em Inverse Problems}, 35(114008), 2019.

\bibitem{strohmer2009comments}
T~Strohmer and R~Vershynin.
\newblock Comments on the randomized {K}aczmarz method.
\newblock {\em Journal of Fourier Analysis and Applications}, 15(4):437--440,
  2009.

\bibitem{strohmer2009randomized}
T~Strohmer and R~Vershynin.
\newblock A randomized {K}aczmarz algorithm with exponential convergence.
\newblock {\em Journal of Fourier Analysis and Applications}, 15(2):262--278,
  2009.

\bibitem{tanabe1971projection}
K~Tanabe.
\newblock Projection method for solving a singular system of linear equations
  and its applications.
\newblock {\em Numerische Mathematik}, 17(3):203--214, 1971.

\bibitem{whitney1967two}
TM~Whitney and RK~Meany.
\newblock Two algorithms related to the method of steepest descent.
\newblock {\em SIAM Journal on Numerical Analysis}, 4(1):109--118, 1967.

\bibitem{zeiler2012adadelta}
MD~Zeiler.
\newblock {ADADELTA}: an adaptive learning rate method.
\newblock {\em arXiv preprint arXiv:1212.5701}, 2012.

\bibitem{zeng2014incremental}
XQ~Zeng and GZ~Li.
\newblock Incremental partial least squares analysis of big streaming data.
\newblock {\em Pattern Recognition}, 47(11):3726--3735, 2014.

\bibitem{zouzias2013randomized}
A~Zouzias and NM~Freris.
\newblock Randomized extended {K}aczmarz for solving least squares.
\newblock {\em SIAM Journal on Matrix Analysis and Applications},
  34(2):773--793, 2013.

\end{thebibliography}

\end{document}